\documentclass[11pt]{amsart}

\usepackage[a4paper,margin=1in]{geometry}
\usepackage{amsmath,amssymb,amsthm,mathtools}
\usepackage{enumitem}
\usepackage{hyperref}
\usepackage{mathrsfs}
\usepackage[T1]{fontenc}
\usepackage[utf8]{inputenc}
\usepackage{tikz}
\usetikzlibrary{arrows.meta,positioning,calc}

\newtheorem{theorem}{Theorem}[section]
\newtheorem{lemma}[theorem]{Lemma}
\newtheorem{corollary}[theorem]{Corollary}

\newtheorem{conjecture}[theorem]{Conjecture}
\theoremstyle{definition}
\newtheorem{definition}[theorem]{Definition}

\newtheorem{remark}[theorem]{Remark}

\newcommand{\rhoK}{\rho}
\newcommand{\seed}{s}

\title{A Degree-Preserving Builder--Chooser Game}
\author{András London}
\subjclass[2020]{05C57, 05C35, 05C55, 05C70}
\keywords{Builder--Chooser game, degree-preserving growth, graph games, clique forcing, evolving graphs}

\address{Institute of Informatics, University of Szeged, Szeged, Hungary}
\email{london@inf.u-szeged.hu \\ ORCID: 0000-0003-1957-5368}
\date{}

\begin{document}
\maketitle

\begin{abstract}
We propose a degree-preserving variant of the Builder--Chooser clique game of Pettie, Tardos, and Walczak \cite{PTW}.
In each round, Builder chooses a matching, performs a degree-preserving growth (DPG) step by replacing the chosen edges with edges incident to a new vertex. Then partitions the entire edge set into two parts, and Chooser keeps one part.
We begin the study of this game with the first nontrivial target, namely forcing a triangle.
For triangle-free initial graphs we prove an exact one-round criterion, derive an exact one-round threshold on paths and exact forcing times on cycles, and identify the $5$-cycle as the first genuine two-round example.
We then formulate a one-round criterion for larger cliques, prove a sharp exact result for forcing $K_4$ from triangle-free seeds. We establish general lower bounds on clique-forcing times from clique-free seeds, and isolate a conjectural template-amplifier lemma which, if proved, would imply that every clique is forceable from some triangle-free seed.
\end{abstract}

\section{Introduction}

The clique game of Pettie, Tardos, and Walczak is motivated by the Erd\H{o}s--Hajnal conjecture on high-chromatic high-girth subgraphs and, more broadly, by the principle that Builder strategies in suitable graph games can be converted into structural constructions \cite{EH,PTW}.
In the present note we propose a different motivation for a related game. The motivation for our game is primarily mathematical.
The Builder--Chooser clique game of Pettie, Tardos, and Walczak asks how clique structure can
be forced when Builder grows the graph and Chooser repeatedly keeps only one side of a binary
partition of the edge set. We study a more rigid variant in which Builder no longer has free control over the neighborhood of the new vertex.
Instead, growth must proceed through a degree-preserving matching step.

This viewpoint is suggested by the degree-preserving growth (DPG) process. In that framework, vertex degree is treated as an intrinsic property of a vertex, and the growth step respects existing degree constraints by replacing a matching with incidence to a new vertex. Such a rule is natural in settings where old vertices have limited connection capacity or where adding new incident edges carries a local cost. The DPG literature motivates this perspective both from network modeling and from the
mathematical theory of degree-preserving graph dynamics \cite{KharelMezeiToroczkai,ErdosKharelMezeiToroczkai}.

Our game formalizes exactly this situation.
Builder performs a degree-preserving local rewrite by choosing a matching, deleting it, adding a new vertex, and joining the new vertex to the endpoints of the matching. He then partitions the entire edge set into two parts, and Chooser keeps one part. The resulting forcing time of a target graph $H$ measures the robustness of the emergence of $H$ under constrained growth and worst-case binary pruning.

This restriction leads to a qualitatively different problem than the Builder--Chooser clique game. In the original game, the new vertex may be attached very freely and connected to all existing vertices, so clique growth is driven by global branching.
In our variant, clique growth is mediated by a local matching-based rewrite, which creates strong structural bottlenecks.
The resulting question is therefore whether clique forcing is still possible under such a rigid growth mechanism, and if so, at what cost in forcing time ($\tau$) and in the complexity of the initial seed (a graph $G$).

To state this program quantitatively, for $k\ge 3$ we define
\[
\rhoK(k):=\min\{\tau_{K_k}(G): G \text{ is triangle-free and } \tau_{K_k}(G)<\infty\},
\]
with $\rhoK(k)=\infty$ if no triangle-free graph can force $K_k$, and
\[
\seed(k):=\min\{|V(G)|: G \text{ is triangle-free and } \tau_{K_k}(G)<\infty\},
\]
with $\seed(k)=\infty$ if no such graph exists.
Thus $\rhoK(k)$ measures the minimum forcing time and $\seed(k)$ is the minimum size of a
triangle-free seed from which Builder can force a copy of $K_k$.

The first nontrivial target is the triangle $K_3$, which can be interpreted as the smallest feedback motif, closure pattern, or locally redundant structure.
Thus, unlike the original clique game, whose emphasis is on forcing large complete subgraphs under a very flexible attachment rule, our game asks how clique structure from triangles
to larger complete graphs can be forced in a sparse evolving graph when growth is constrained by a local degree-preserving operation.

This game belongs to the broader landscape of positional and graph games. Classical references are Beck's monograph on combinatorial games \cite{Beck}, and the foundational work on biased positional by Chv\'atal and Erd\H{o}s \cite{ChvatalErdos}. The book of
Hefetz, Krivelevich, Stojakovi\'c, and Szab\'o  place clique-building, Maker--Breaker,
Avoider--Enforcer, and related graph-forcing problems into a common framework \cite{HKSS, HefetzKrivelevichSzabo2007}.
More specific clique- and graph-oriented directions include, for instance, Gebauer's clique game \cite{GebauerClique},
and recent multistage positional games \cite{BarkeyClemensHamannMikalackiSgueglia}.
Within that landscape, the game of Pettie, Tardos, and Walczak is the closest direct ancestor of our model, since it also combines Builder growth with an adversarial binary choice on the edge set.
At the same time, the present degree-preserving variant is structurally quite different.
In this sense, our game may be viewed as a constrained positional graph game in which clique forcing is studied under a rigid evolving-host rule.

The manuscript has three parts. First, we develop an exact theory for triangle forcing on triangle-free graphs, including an exact one-round threshold on paths and exact forcing times on cycles.
Second, we turn to larger cliques: we obtain a one-round criterion for forcing $K_{r+1}$,
prove the exact value $\rhoK(4)=3$ for forcing $K_4$ from triangle-free seeds, and derive general lower bounds for clique forcing from $K_r$-free graphs.
Third, we formulate a recursive universality program for forcing arbitrary cliques, identify a precise missing ``template amplifier'' lemma, and show that such an amplifier
would imply finiteness of $\rhoK(k)$ for all $k$ together with explicit upper bounds.

\section{The game}

\begin{definition}[Degree-preserving Builder--Chooser game]
Let $G_0$ be a finite simple graph.
At round $t$, the current graph is $G_t=(V_t,E_t)$.
The round consists of the following steps.
\begin{enumerate}[label=(\roman*)]
    \item \textbf{Builder's growth step.}
    Builder chooses a matching
    \[
        M_t=\{x_1y_1,\dots,x_my_m\}\subseteq E_t.
    \]
    Let $V(M_t)$ denote the set of endpoints of the edges of $M_t$.
    Builder forms an intermediate graph $G_t^+$ by deleting the edges of $M_t$,
    adding a new vertex $w_t$, and joining $w_t$ to every vertex of $V(M_t)$:
    \[
        G_t^+ := G_t - M_t + \{w_tz:\ z\in V(M_t)\}.
    \]

    \item \textbf{Builder's partition step.}
    Builder chooses a partition
    \[
        E(G_t^+) = E_t^{(0)} \sqcup E_t^{(1)}.
    \]

    \item \textbf{Chooser's move.}
    Chooser selects $i_t\in\{0,1\}$, and the next graph is
    \[
        G_{t+1}:=(V(G_t^+),E_t^{(i_t)}).
    \]
\end{enumerate}
Vertices are never deleted, only edges.
\end{definition}

\begin{definition}[Forcing time]
For a fixed graph $H$, we say that $G_0$ is \emph{$H$-forceable} if Builder has a strategy
ensuring that $H\subseteq G_t$ for some $t\ge 0$.
We write
\[
\tau_H(G_0):=\min\{t\ge 0:\text{Builder can force }H\subseteq G_t \text{ from host graph } G_0\},
\]
with $\tau_H(G_0)=\infty$ if Chooser can avoid $H$ forever.
\end{definition}

In this manuscript we begin with the target $H=K_3$ and then discuss larger cliques.

\section{Triangle forcing}

\subsection{An exact one-round criterion}

For a graph $G$, define
\[
\sigma(G):=\max\{\nu(G[V(M)]-M): M \text{ is a matching in }G\},
\]
where $\nu(\cdot)$ denotes matching number.
Thus $\sigma(G)$ measures how many pairwise disjoint ``cross-edges'' can remain among
the endpoints of a matching chosen by Builder.

\begin{theorem}\label{thm:one-round-triangle}
Let $G$ be triangle-free. Then
\[
\tau_{K_3}(G)=1
\qquad\Longleftrightarrow\qquad
\sigma(G)\ge 2.
\]
\end{theorem}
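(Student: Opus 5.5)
The plan is to show that after one growth step every triangle must pass through the new vertex $w=w_0$ and use a cross-edge. This turns the one-round game into a question about how the triangles of $G_0^+$ can be split between the two parts of Builder's partition. First note that $\tau_{K_3}(G)\ge 1$, since $G$ is triangle-free. So $\tau_{K_3}(G)=1$ means exactly this: for some matching $M$, Builder can partition $E(G_0^+)$ so that both parts contain a triangle. Since $G_1$ is a spanning subgraph of $G_0^+$, only triangles of $G_0^+$ matter.

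\textbf{Step 1: the triangles of $G_0^+$.} The graph $G-M$ is triangle-free, so every triangle of $G_0^+=G-M+\{wz: z\in V(M)\}$ contains $w$. Its other two vertices $a,b$ then lie in $V(M)$ and satisfy $ab\in E(G)\setminus M$. Conversely, every edge $ab$ of the graph $F_M:=G[V(M)]-M$ gives the triangle $wab$. So the triangles of $G_0^+$ correspond bijectively to the edges of $F_M$. Moreover, two such triangles $wab$ and $wcd$ are edge-disjoint if and only if $\{a,b\}\cap\{c,d\}=\emptyset$, because any common vertex $x$ would make $wx$ a shared edge.

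\textbf{Step 2: the direction $\sigma(G)\ge 2\Rightarrow\tau_{K_3}(G)=1$.} Choose $M$ with $\nu(F_M)\ge 2$, and take disjoint edges $ab,cd$ of $F_M$. By Step 1, the triangles $wab$ and $wcd$ are edge-disjoint. Builder puts $\{wa,wb,ab\}$ into $E^{(0)}$, puts $\{wc,wd,cd\}$ into $E^{(1)}$, and distributes the remaining edges arbitrarily. Whichever part Chooser keeps, $G_1$ contains a triangle.

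\textbf{Step 3: the direction $\sigma(G)\le 1\Rightarrow\tau_{K_3}(G)>1$.} This is the step needing slightly more care. Fix any matching $M$ and any partition chosen by Builder; we know $\nu(F_M)\le 1$. If $F_M$ has no edges, $G_0^+$ has no triangles and we are done. Otherwise the edges of $F_M$ pairwise intersect, so they form either a star or a triangle. A triangle is impossible, since $F_M\subseteq G$ and $G$ is triangle-free. Hence there is a vertex $c$ lying on every edge of $F_M$; when $F_M$ is a single edge, take $c$ to be either endpoint. By Step 1, every triangle of $G_0^+$ then contains the edge $wc$. Chooser keeps the part not containing $wc$. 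That part contains no triangle, so $G_1$ is triangle-free.

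I expect the main obstacle to be Step 3, specifically the observation that pairwise-intersecting edges in a triangle-free graph share a common vertex. This is what converts $\nu(F_M)\le 1$ into a single edge $wc$ that Chooser can destroy. Combining Steps 2 and 3 with $\tau_{K_3}(G)\ge 1$ gives the equivalence.
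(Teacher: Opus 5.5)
Your proof is correct and follows the paper's overall route. In both, the triangles of $G^+$ are exactly the triples $wab$ with $ab$ an edge of $G[V(M)]-M$. Two disjoint such edges give two edge-disjoint triangles, which Builder places on opposite sides of the partition.

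The two arguments differ in the converse direction, and there yours is the sound one. The paper claims that $\nu(G[V(M)]-M)\le 1$ leaves at most one triangle in $G^+$. This is false whenever $G[V(M)]-M$ is a star with two or more edges. For example, the tree $G$ with edges $a'a,ac,cc',ce,ee'$ has $\sigma(G)=1$. Yet $M=\{aa',cc',ee'\}$ produces the two triangles $wac$ and $wce$.

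Your Step 3 supplies the missing reason. Pairwise-intersecting edges in a triangle-free graph share a common vertex $c$. Hence every triangle of $G^+$ passes through the single edge $wc$, and Chooser simply keeps the part not containing it. The theorem's conclusion stands, but your version is the one that actually justifies it.
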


\begin{proof}
Suppose Builder chooses a matching $M$ in $G$ and forms the intermediate graph $G^+$
with new vertex $w$.
Since $G$ is triangle-free, every triangle in $G^+$ must contain $w$.
Hence the triangles in $G^+$ are exactly the triples $wuv$
such that $uv\in E(G[V(M)]-M)$.

Therefore, edge-disjoint triangles in $G^+$ are in one-to-one correspondence with
pairwise disjoint edges in $G[V(M)]-M$.

If $\nu(G[V(M)]-M)\ge 2$, then $G^+$ contains two edge-disjoint triangles.
Builder places one entire triangle into the first part of the partition and the other
entire triangle into the second part. Whichever part Chooser keeps, a triangle survives.
Thus $\tau_{K_3}(G)=1$.

Conversely, if for every matching $M$ we have $\nu(G[V(M)]-M)\le 1$,
then after every Builder move the intermediate graph $G^+$ contains at most one triangle.
Hence at most one side of the partition can contain a triangle, and Chooser can always
keep the other side. Therefore Builder cannot force a triangle in one round.
\end{proof}

\begin{corollary}\label{cor:paths-oneround}
For the path $P_n$,
\[
\tau_{K_3}(P_n)=1
\qquad\Longleftrightarrow\qquad
n\ge 6.
\]
\end{corollary}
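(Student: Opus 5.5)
Since $P_n$ is triangle-free, Theorem~\ref{thm:one-round-triangle} reduces the claim to a purely combinatorial statement: $\sigma(P_n)\ge 2$ if and only if $n\ge 6$. Label the path $v_1v_2\cdots v_n$. For a matching $M$, the graph $P_n[V(M)]-M$ keeps exactly the path edges $v_iv_{i+1}$ with both endpoints covered by $M$ but $v_iv_{i+1}\notin M$. We therefore need two \emph{disjoint} such cross-edges.

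\textbf{Sufficiency ($n\ge 6$).} Take $M=\{v_1v_2,\,v_3v_4,\,v_5v_6\}$. Then $V(M)=\{v_1,\dots,v_6\}$, and the surviving edges are $v_2v_3$ and $v_4v_5$. These are disjoint, so $\nu\ge 2$ and hence $\sigma(P_n)\ge 2$. Because $P_6\subseteq P_n$, the same matching works for every $n\ge 6$.

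\textbf{Necessity ($n\le 5$).} Suppose $M$ is a matching in $P_n$ with two disjoint cross-edges $v_av_{a+1}$ and $v_bv_{b+1}$, where $a+1<b$. I will show this forces $n\ge 6$.

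1. Each cross-edge $v_iv_{i+1}\notin M$ has both endpoints matched. On a path, $v_i$ can then only be matched to $v_{i-1}$, and $v_{i+1}$ only to $v_{i+2}$.
2. Applying this to both cross-edges gives $v_{a-1}v_a,\ v_{a+1}v_{a+2},\ v_{b-1}v_b,\ v_{b+1}v_{b+2}\in M$. So the indices $a-1,\dots,a+2$ and $b-1,\dots,b+2$ all lie in $\{1,\dots,n\}$.
3. If $b=a+2$, then $v_{a+1}v_{a+2}$ and $v_{b-1}v_b=v_{a+1}v_{a+2}$ coincide, which is consistent. The vertices used are $v_{a-1}$ through $v_{a+4}$, six distinct vertices.
4. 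If $b>a+2$, even more vertices are used.

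In either case $n\ge 6$. Therefore $\sigma(P_n)\le 1$ for $n\le 5$, and the corollary follows from Theorem~\ref{thm:one-round-triangle}.

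The only mildly delicate point is step 1: a cross-edge's endpoints must be matched \emph{outward} along the path. The reason is that on a path each internal vertex has exactly two neighbours, and the cross-edge itself is excluded from $M$. Once this is noted, the count of six vertices is immediate.
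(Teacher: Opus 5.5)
Your proof is correct. The reduction to $\sigma(P_n)\ge 2$ via Theorem~\ref{thm:one-round-triangle} and the witness $M=\{v_1v_2,v_3v_4,v_5v_6\}$ for $n\ge 6$ are exactly as in the paper. The difference is in the necessity direction.

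The paper argues by counting. For $n\le 5$ every matching has at most two edges, so $V(M)$ has at most four vertices. It then concludes from the path structure that $G[V(M)]-M$ has matching number at most $1$. The implicit reason is that an induced subgraph of a path on four vertices has at most three edges, and two of them lie in $M$.

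You argue locally instead. A cross-edge $v_iv_{i+1}$ forces both flanking edges $v_{i-1}v_i$ and $v_{i+1}v_{i+2}$ into $M$. So two disjoint cross-edges force $a\ge 2$, $b+2\le n$ and $b\ge a+2$, which gives $n\ge 6$.

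The paper's route is shorter but relies on the terse remark ``since it is a subgraph of a path.'' Yours is self-contained and yields a little more. It shows that every witness to $\sigma(P_n)\ge 2$ uses at least three matching edges and occupies at least six consecutive vertices. In the tight case this is exactly the configuration $\{v_{a-1}v_a,v_{a+1}v_{a+2},v_{a+3}v_{a+4}\}$.

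One small point: in your step~4 the case $b=a+3$ cannot actually occur, since $v_{a+1}v_{a+2}$ and $v_{a+2}v_{a+3}$ would both lie in $M$. The cleanest way to close is simply $n\ge b+2\ge a+4\ge 6$, which covers all cases at once.
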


\begin{proof}
Let $P_n=v_1v_2\cdots v_n$.
If $n\ge 6$, choose
$ M=\{v_1v_2,\ v_3v_4,\ v_5v_6\}$.
Then $G[V(M)]-M$ contains the disjoint edges
$v_2v_3$ and $v_4v_5$,
so $\sigma(P_n)\ge 2$ and Theorem~\ref{thm:one-round-triangle} gives $\tau_H(P_n)=1$ (see Fig.~\ref{fig:dpg-step}).

If $n\le 5$, every matching $M$ has size at most $2$.
Thus $G[V(M)]$ has at most $4$ vertices, and since it is a subgraph of a path,
$G[V(M)]-M$ has matching number at most $1$.
Therefore $\sigma(P_n)\le 1$, and again by Theorem~\ref{thm:one-round-triangle},
$\tau_{K_3}(P_n)\neq 1$.
\end{proof}

\begin{figure}[htbp]
\centering
\begin{tikzpicture}[scale=0.6,
    vertex/.style={circle,draw,fill=white,inner sep=1.5pt},
    newvertex/.style={circle,draw,fill=white,inner sep=1.7pt},
    matchedge/.style={very thick},
    edge/.style={thin},
    every node/.style={font=\small}
]

\begin{scope}[xshift=0cm]
\node at (2,2.6) {\textbf{Before the DPG step}};

\node[vertex,label=below:$u_1$] (u1) at (0,0) {};
\node[vertex,label=below:$v_1$] (v1) at (1.8,0) {};
\node[vertex,label=below:$u_2$] (u2) at (3.4,0.8) {};
\node[vertex,label=below:$v_2$] (v2) at (5.2,0.8) {};
\node[vertex,label=below:$u_3$] (u3) at (6.8,0) {};
\node[vertex,label=below:$v_3$] (v3) at (8.6,0) {};

\draw[edge] (v1)--(u2);
\draw[edge] (v2)--(u3);

\draw[matchedge] (u1)--(v1);
\draw[matchedge] (u2)--(v2);
\draw[matchedge] (u3)--(v3);

\node at (4.3,-1.0) {} ;
\end{scope}

\draw[-{Latex[length=3mm,width=2mm]}, very thick] (9.8,0.8) -- (11.5,0.8);
\node at (10.65,1.3) {\small delete $M$};
\node at (10.65,0.2) {\small add $w$};

\begin{scope}[xshift=12.5cm]
\node at (0,2.6) {\textbf{After the DPG step}};

\node[vertex,label=below:$u_1$] (U1) at (0,0) {};
\node[vertex,label=below:$v_1$] (V1) at (1.8,0) {};
\node[vertex,label=below:$u_2$] (U2) at (3.4,0.8) {};
\node[vertex,label=below:$v_2$] (V2) at (5.2,0.8) {};
\node[vertex,label=below:$u_3$] (U3) at (6.8,0) {};
\node[vertex,label=below:$v_3$] (V3) at (8.6,0) {};
\node[newvertex,label=above:$w$] (W) at (4.3,2.0) {};

\draw[edge] (V1)--(U2);
\draw[edge] (V2)--(U3);

\draw[matchedge] (W)--(U1);
\draw[matchedge] (W)--(V1);
\draw[matchedge] (W)--(U2);
\draw[matchedge] (W)--(V2);
\draw[matchedge] (W)--(U3);
\draw[matchedge] (W)--(V3);

\node at (4.3,-1.0) {};
\end{scope}

\end{tikzpicture}
\caption{A degree-preserving growth (DPG) step. Builder chooses the matching $M=\{u_1v_1,u_2v_2,u_3v_3\}$, deletes its edges, adds a new vertex $w$, and joins $w$ to all endpoints of $M$.}
\label{fig:dpg-step}
\end{figure}

\begin{theorem}\label{thm:cycles-triangle}
For the cycle $C_n$,
\[
\tau_{K_3}(C_n)=
\begin{cases}
0, & n=3,\\
1, & n=4,\\
2, & n=5,\\
1, & n\ge 6.
\end{cases}
\]
\end{theorem}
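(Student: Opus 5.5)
The plan is to dispose of each case separately, using Theorem~\ref{thm:one-round-triangle} for every case except $n=5$. For $n=5$ I will need a lower-bound argument and an explicit two-round strategy. Throughout, write $C_n=v_1v_2\cdots v_nv_1$.

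For $n=3$ the seed already contains $K_3$, so $\tau_{K_3}(C_3)=0$. For $n=4$, take $M=\{v_1v_2,v_3v_4\}$. Then $V(M)$ is all of $V(C_4)$, and $C_4[V(M)]-M$ consists of the two disjoint edges $v_2v_3$ and $v_4v_1$. Hence $\sigma(C_4)\ge 2$, and since $C_4$ is triangle-free and has no triangle at time $0$, Theorem~\ref{thm:one-round-triangle} gives $\tau_{K_3}(C_4)=1$. For $n\ge 6$ I argue as in Corollary~\ref{cor:paths-oneround}: take $M=\{v_1v_2,v_3v_4,v_5v_6\}$, which leaves the disjoint cross-edges $v_2v_3$ and $v_4v_5$. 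The possible extra edge $v_6v_1$ in $C_6$ only helps. So $\sigma(C_n)\ge 2$ and $\tau_{K_3}(C_n)=1$.

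For $n=5$, first the lower bound $\tau_{K_3}(C_5)\ge 2$. Since $C_5$ is triangle-free, $\tau\neq 0$. It then suffices to show $\sigma(C_5)\le 1$ and apply Theorem~\ref{thm:one-round-triangle}. A matching in $C_5$ has at most $2$ edges.
\begin{itemize}
\item If $|M|\le 1$, then $C_5[V(M)]-M$ has no edges.
\item If $|M|=2$, then $V(M)$ consists of $4$ of the $5$ vertices and induces a path $a\,b\,c\,d$. The only perfect matching of this path is $\{ab,cd\}$, so $C_5[V(M)]-M$ is the single edge $bc$, and $\nu=1$.
\end{itemize}

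For the upper bound I give an explicit first move. Choose $M=\{v_1v_2,v_3v_4\}$. The intermediate graph $G^+$ has the seven edges
\[
v_2v_3,\ v_4v_5,\ v_5v_1,\ wv_1,\ wv_2,\ wv_3,\ wv_4 .
\]
Builder partitions them as follows.
\begin{itemize}
\item $E^{(0)}=\{wv_2,wv_3,v_2v_3\}$, which is a triangle.
\item $E^{(1)}=\{wv_1,v_1v_5,v_5v_4,v_4w\}$, which is a $4$-cycle $w\,v_1\,v_5\,v_4\,w$.
\end{itemize}
If Chooser keeps $E^{(0)}$, a triangle is present at time $1$. If Chooser keeps $E^{(1)}$, the graph $G_1$ is a $4$-cycle together with isolated vertices, so it is triangle-free. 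Builder then plays the $C_4$ move from the case $n=4$ on this cycle, ignoring the isolated vertices. This forces a triangle in one more round, so a triangle is present by time $2$. Hence $\tau_{K_3}(C_5)\le 2$, and combined with the lower bound, $\tau_{K_3}(C_5)=2$.

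The main obstacle is finding this partition for $C_5$: each side must either already contain a triangle or be one-round forceable. Once the intermediate graph is seen to split as a triangle plus a $C_4$, everything else is routine.
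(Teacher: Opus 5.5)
Your proof is correct and follows essentially the same route as the paper. Theorem~\ref{thm:one-round-triangle} handles $n=4$ and $n\ge 6$ via the same matchings, the lower bound for $C_5$ is the same observation (you phrase it as $\sigma(C_5)\le 1$ via the unique perfect matching of $P_4$, the paper as ``exactly one triangle $wv_2v_3$''), and the upper bound uses the identical triangle-plus-$C_4$ partition of the intermediate graph, with your explicit check of $n=4$ and of the isolated vertices $v_2,v_3$ being slightly more careful than the paper's text.
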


\begin{proof}
The case $n=3$ is immediate.

For $n\ge 6$, label the cycle as $v_1v_2\cdots v_nv_1$,
and choose $M=\{v_1v_2,\ v_3v_4,\ v_5v_6\}$. Then $G[V(M)]-M$ contains the disjoint edges $v_2v_3$ and $v_4v_5$,
so $\sigma(C_n)\ge 2$ and therefore $\tau_{K_3}(C_n)=1$
by Theorem~\ref{thm:one-round-triangle}.
The case $n=4$ also follows from this theorem.

It remains to handle $C_5$.
Label the cycle as $v_1v_2v_3v_4v_5v_1$.
We first show that $\tau_{K_3}(C_5)\neq 1$.
Indeed, every matching in $C_5$ has size at most $2$.
If Builder chooses a matching of size $1$, the intermediate graph contains no triangle.
If Builder chooses a matching of size $2$, say
$M=\{v_1v_2,\ v_3v_4\}$,
then the intermediate graph contains exactly one triangle, namely
$wv_2v_3$.
Hence Builder cannot force a triangle in one round.

We now show that $\tau_H(C_5)\le 2$.
Builder again chooses $M=\{v_1v_2,\ v_3v_4\}$.
The intermediate graph has edge set $
\{wv_1,wv_2,wv_3,wv_4,\ v_2v_3,\ v_4v_5,\ v_5v_1\}$.
This graph contains the triangle $T:=wv_2v_3$ and the $4$-cycle $Q:=wv_1v_5v_4w$.
These are edge-disjoint and together cover all edges of the intermediate graph.
Builder puts $E(T)$ into one part of the partition and $E(Q)$ into the other.
If Chooser keeps the first part, Builder has already forced a triangle.
If Chooser keeps the second part, the surviving graph is a $C_4$, from which Builder
wins in one more round.
Thus $\tau_{K_3}(C_5)=2$.
\end{proof}

\begin{remark}
Theorem~\ref{thm:cycles-triangle} identifies the $5$-cycle as the first genuinely
nontrivial two-round instance for triangle forcing.
\end{remark}

\section{Larger cliques: one-round forcing and support}

\subsection{A one-round criterion for larger cliques}

For a graph $H$ and an integer $r\ge 2$, let $\nu_{K_r}(H)$
denote the maximum number of pairwise vertex-disjoint copies of $K_r$ in $H$.
For a graph $G$, define
\[
\kappa_r(G):=\max\{\nu_{K_r}(G[V(M)]-M): M \text{ is a matching in }G\}.
\]

\begin{theorem}\label{thm:larger-cliques-oneround}
Let $r\ge 2$, and let $G$ be $K_{r+1}$-free.
Then
\[
\tau_{K_{r+1}}(G)=1
\qquad\Longleftrightarrow\qquad
\kappa_r(G)\ge 2.
\]
\end{theorem}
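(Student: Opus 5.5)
The plan follows the proof of Theorem~\ref{thm:one-round-triangle}, with vertex-disjoint $r$-cliques in place of disjoint edges. The first step is to locate the copies of $K_{r+1}$ in the intermediate graph. Fix a matching $M$ and let $G^+$ be the graph formed with the new vertex $w$. Every edge of $G^+$ not incident to $w$ is an edge of $G-M\subseteq G$. Since $G$ is $K_{r+1}$-free, every copy of $K_{r+1}$ in $G^+$ must therefore contain $w$. The neighbourhood of $w$ in $G^+$ is exactly $V(M)$, and the edges of $G^+$ inside $V(M)$ are exactly those of $G[V(M)]-M$. Hence the copies of $K_{r+1}$ in $G^+$ are precisely the sets $\{w\}\cup S$, where $S$ spans a $K_r$ in $G[V(M)]-M$. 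Note also that $\tau_{K_{r+1}}(G)\neq 0$, because $G$ itself is $K_{r+1}$-free.

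The key observation is that two such copies $\{w\}\cup S$ and $\{w\}\cup S'$ are edge-disjoint if and only if $S\cap S'=\emptyset$. If some $z\in S\cap S'$, both copies contain the edge $wz$. Conversely, if $S\cap S'=\emptyset$, any common edge would have to avoid $w$ and so lie in $S\cap S'$, which is impossible. Thus edge-disjoint copies of $K_{r+1}$ in $G^+$ correspond exactly to vertex-disjoint copies of $K_r$ in $G[V(M)]-M$.

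For the forward direction, suppose $\kappa_r(G)\ge 2$. Builder chooses a matching $M$ attaining two vertex-disjoint $r$-cliques $S,S'$. The two corresponding copies of $K_{r+1}$ are edge-disjoint, so Builder puts the edges of one into part $0$, the edges of the other into part $1$, and distributes the remaining edges arbitrarily. Whichever part Chooser keeps contains a $K_{r+1}$, so $\tau_{K_{r+1}}(G)=1$.

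For the converse, suppose $\kappa_r(G)\le 1$. This direction is where the argument must go beyond a literal copy of the triangle proof. Here $G^+$ may contain many copies of $K_{r+1}$, since the $r$-cliques may pairwise overlap, so we cannot say it contains at most one. Instead I argue as follows. Suppose that, for some matching $M$ and some partition, both parts contained a copy of $K_{r+1}$. These two copies lie in disjoint edge sets, so they are edge-disjoint. By the key observation, their $r$-cliques would then be vertex-disjoint in $G[V(M)]-M$, giving $\nu_{K_r}(G[V(M)]-M)\ge 2$ and contradicting $\kappa_r(G)\le 1$. So for every Builder move, at least one part contains no $K_{r+1}$. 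Chooser keeps that part, and no $K_{r+1}$ appears after one round, so $\tau_{K_{r+1}}(G)\neq 1$.
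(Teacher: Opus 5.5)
Your proof is correct and follows the same route as the paper: every $K_{r+1}$ in $G^+$ passes through $w$, edge-disjoint copies correspond to vertex-disjoint $r$-cliques in $G[V(M)]-M$, and Builder places two such copies on opposite sides of the partition. Your converse is more careful than the paper's. The paper asserts that $\kappa_r(G)\le 1$ makes $G^+$ contain at most one copy of $K_{r+1}$, which is false when several $r$-cliques pairwise intersect. Your argument, that copies lying in different parts are edge-disjoint and therefore have vertex-disjoint $r$-cliques, is exactly the justification that step needs.
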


\begin{proof}
Fix a matching $M$ chosen by Builder, and let $G^+$ be the corresponding intermediate graph
with new vertex $w$.
Since $G$ is $K_{r+1}$-free and the DPG step adds no edges between old vertices, every copy of
$K_{r+1}$ in $G^+$ must contain the new vertex $w$.
Therefore such a copy is of the form $\{w\}\cup X$,
where $X$ is a set of $r$ old vertices spanning a copy of $K_r$ in $G[V(M)]-M$.
Conversely, every copy of $K_r$ in $G[V(M)]-M$ gives rise to a copy of $K_{r+1}$ in $G^+$
by joining to $w$.

Hence copies of $K_{r+1}$ in $G^+$ are in one-to-one correspondence with copies of $K_r$
in $G[V(M)]-M$.
Moreover, two such copies of $K_{r+1}$ are edge-disjoint if and only if the corresponding
copies of $K_r$ are vertex-disjoint.
If $\nu_{K_r}(G[V(M)]-M)\ge 2$ for some matching $M$, then $G^+$ contains two edge-disjoint
copies of $K_{r+1}$.
Builder places one whole copy into one part of the partition and the other whole copy into
the other part.
Whichever part Chooser keeps, it contains a copy of $K_{r+1}$.
Thus $\tau_{K_{r+1}}(G)=1$.

Conversely, if for every matching $M$ we have $\nu_{K_r}(G[V(M)]-M)\le 1$,
then every intermediate graph $G^+$ contains at most one copy of $K_{r+1}$.
Therefore at most one side of the partition can contain a copy of $K_{r+1}$, and Chooser
can always keep the other side.
Hence Builder cannot force $K_{r+1}$ in one round.
\end{proof}

\begin{corollary}\label{cor:no-one-round-k4}
If $G$ is triangle-free, then $\tau_{K_4}(G)\neq 1$.
\end{corollary}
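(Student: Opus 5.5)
The plan is a direct application of Theorem~\ref{thm:larger-cliques-oneround} with $r=3$. A triangle-free graph is in particular $K_4$-free, so the theorem applies and gives $\tau_{K_4}(G)=1$ if and only if $\kappa_3(G)\ge 2$. It therefore suffices to show that $\kappa_3(G)=0$ whenever $G$ is triangle-free.

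Fix any matching $M$ in $G$. The graph $G[V(M)]-M$ is a subgraph of $G$, so it is triangle-free. Hence it contains no copy of $K_3$, and $\nu_{K_3}(G[V(M)]-M)=0$. Taking the maximum over all matchings gives $\kappa_3(G)=0<2$. The converse direction of Theorem~\ref{thm:larger-cliques-oneround} then yields $\tau_{K_4}(G)\neq 1$.

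A direct argument is also available, without passing through $\kappa_3$. After any DPG step, every $K_4$ in $G^+$ must contain $w$, because old vertices only lose edges. Such a $K_4$ would then require a triangle among old vertices, which does not exist. So $G^+$ is $K_4$-free, and Chooser survives the round regardless of the partition.

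One point needs care: $\tau_{K_4}(G)=0$ is also excluded, since $G$ is triangle-free and hence contains no $K_4$. This is not needed for the claim $\tau\neq 1$ itself. There is no real obstacle here; the only step to check is that $K_4$-freeness, which the theorem requires, follows from triangle-freeness.
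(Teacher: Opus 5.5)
Your proposal is correct and matches the paper's proof: for every matching $M$, the graph $G[V(M)]-M$ is a triangle-free subgraph of $G$, so $\kappa_3(G)=0$, and Theorem~\ref{thm:larger-cliques-oneround} with $r=3$ gives $\tau_{K_4}(G)\neq 1$. Your explicit check that triangle-freeness supplies the $K_4$-freeness hypothesis, which the paper leaves implicit, and your direct argument that $G^+$ contains no $K_4$ at all are both sound.
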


\begin{proof}
If $G$ is triangle-free, then for every matching $M$, the graph $G[V(M)]-M$ is triangle-free.
Thus $\nu_{K_3}(G[V(M)]-M)=0$
for every $M$, so $\kappa_3(G)=0$.
Now apply Theorem~\ref{thm:larger-cliques-oneround} with $r=3$.
\end{proof}

\subsection{Supported cliques}

\begin{definition}
A \emph{supported clique of order $r$} in a graph $G$ is a pair $(C,S)$ such that
\begin{itemize}
    \item $C=\{c_1,\dots,c_r\}$ spans a copy of $K_r$ in $G$,
    \item $S=\{c_1x_1,\dots,c_rx_r\}$ is a matching in $G$,
    \item the support vertices $x_1,\dots,x_r$ all lie outside $C$.
\end{itemize}
\end{definition}

\begin{lemma}\label{lem:supported-promotion}
If $G$ contains a supported clique of order $r$, then Builder can create a copy of
$K_{r+1}$ in the intermediate graph in one round.
\end{lemma}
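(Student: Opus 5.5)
The plan is to use the support matching itself as Builder's matching. Let $(C,S)$ be a supported clique of order $r$ in $G$, with $C=\{c_1,\dots,c_r\}$ and $S=\{c_1x_1,\dots,c_rx_r\}$, and let Builder choose $M:=S$. By definition $S$ is a matching in $G$, so this is a legal growth step. It produces the intermediate graph
\[
G^+ = G - S + \{wz : z\in V(S)\},
\]
where $w$ is the new vertex.

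The first step is to check that $w$ is adjacent to every vertex of $C$. This is immediate, since each $c_i$ is an endpoint of the matching edge $c_ix_i\in S$, so $wc_i\in E(G^+)$ for all $i$.

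The second step, which is the only point needing care, is to confirm that no edge of the clique on $C$ is removed. The deleted edges are exactly the edges $c_ix_i$. Each such edge has one endpoint $x_i\notin C$, because the support vertices lie outside $C$ by definition. Hence no deleted edge has both endpoints in $C$, and every edge $c_ic_j$ survives in $G^+$. This is the step where the hypothesis $x_i\notin C$ is essential: without it a matching edge could be a clique edge, and deleting it would destroy the clique.

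Combining the two steps, $C\cup\{w\}$ spans a copy of $K_{r+1}$ in $G^+$. Equivalently, in the language of Theorem~\ref{thm:larger-cliques-oneround}, $C$ spans a copy of $K_r$ in $G[V(M)]-M$. I do not expect a real obstacle here. The only thing to state clearly is that the lemma concerns the intermediate graph $G^+$ and not $G_{t+1}$, so Chooser's move plays no role.
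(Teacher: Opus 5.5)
Your proposal is correct and follows the paper's proof: Builder plays the support matching $S$, the new vertex $w$ becomes adjacent to every $c_i$, and the edges inside $C$ survive. You also spell out why those edges survive: each deleted edge $c_ix_i$ has $x_i\notin C$. The paper asserts this step without comment.
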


\begin{proof}
Let $
C=\{c_1,\dots,c_r\}$
be the clique, and let $
S=\{c_1x_1,\dots,c_rx_r\}$
be its support matching.
Builder chooses the matching $S$.
After the DPG step, the new vertex $w$ is adjacent to every $c_i$, while all clique edges
inside $C$ remain untouched.
Hence $G^+[\{w,c_1,\dots,c_r\}] \cong K_{r+1}$.
\end{proof}

\begin{remark}
Supported cliques give the natural one-step promotion mechanism for larger cliques.
The main difficulty is that the support edges are consumed by the DPG move, so recursive
forcing of large cliques appears to require a more elaborate template structure.
\end{remark}

\section{Exact forcing time for $K_4$}

\begin{definition}
A \emph{supported triangle} in a graph $G$ is a triangle
\[
T=\{a,b,c\}
\]
together with a matching
\[
S=\{ax,by,cz\}
\]
such that the vertices $x,y,z$ lie outside $T$.
\end{definition}

\begin{lemma}\label{lem:two-supported}
If a graph $G$ contains two vertex-disjoint (compatible) supported triangles, then Builder can force
a copy of $K_4$ in one round.
\end{lemma}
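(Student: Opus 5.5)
The plan is to realize both supported triangles simultaneously inside a single DPG step and then to use the two-copies partition argument from the proof of Theorem~\ref{thm:larger-cliques-oneround}. Let the two supported triangles be $(T_1,S_1)$ with $T_1=\{a_1,b_1,c_1\}$ and $S_1=\{a_1x_1,b_1y_1,c_1z_1\}$, and $(T_2,S_2)$ with $T_2=\{a_2,b_2,c_2\}$ and $S_2=\{a_2x_2,b_2y_2,c_2z_2\}$. I read the word ``compatible'' in the statement as saying that $M:=S_1\cup S_2$ is a matching in $G$. In particular the six support edges are pairwise vertex-disjoint, and no support vertex of one triangle lies in the other triangle. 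Builder chooses exactly this matching $M$.

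The first step is to check that both $K_4$'s actually appear. After the DPG step, the new vertex $w$ is adjacent to every vertex of $V(M)$, and this set contains $T_1\cup T_2$. No triangle edge belongs to $M$: each edge of $M$ has one endpoint outside the corresponding triangle, and by compatibility neither endpoint of an edge of $S_j$ can complete an edge of $T_i$ for $i\ne j$. Hence all six triangle edges survive in $G^+$, and both $Q_1:=G^+[\{w\}\cup T_1]$ and $Q_2:=G^+[\{w\}\cup T_2]$ are copies of $K_4$. This is the argument of Lemma~\ref{lem:supported-promotion}, applied twice with the same new vertex.

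Next comes the edge-disjointness check. The vertex sets of $Q_1$ and $Q_2$ meet only in $w$, because $T_1\cap T_2=\emptyset$. So any edge common to $Q_1$ and $Q_2$ would have both endpoints in $\{w\}$, which is impossible, and the two copies are edge-disjoint. Builder then puts $E(Q_1)$ in one part and $E(Q_2)\cup(\text{all remaining edges})$ in the other. Whichever part Chooser keeps contains a $K_4$, so $K_4$ is forced in one round.

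The only real obstacle is making the compatibility hypothesis precise, namely ensuring that $S_1\cup S_2$ is a matching. If supports were shared (for example $x_1=x_2$) or a support vertex lay in the other triangle, the union would fail to be a matching, or a triangle edge might be deleted. So I would state explicitly that ``compatible'' means the union of the two support matchings is a matching vertex-disjoint in the needed sense, and under that reading the rest is routine.
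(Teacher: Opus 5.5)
Your proof is correct and matches the paper's argument: take $M=S_1\cup S_2$, note that the new vertex $w$ completes $K_4$'s on $T_1$ and $T_2$ that meet only in $w$, and put one on each side of the partition. You are also right to read compatibility as saying that $S_1\cup S_2$ is a matching: the paper's claim that vertex-disjointness of the triangles alone makes the six support edges a matching fails, for instance when $x_1=x_2$, and it is the later definition of compatible supported copies that actually gives this.
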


\begin{proof}
Let the two supported triangles be $T_1=\{a_1,b_1,c_1\}$ and $T_2=\{a_2,b_2,c_2\}$, 
with support matchings $S_1=\{a_1x_1,b_1y_1,c_1z_1\}$ and $S_2=\{a_2x_2,b_2y_2,c_2z_2\}$, respectively.
Since the two supported triangles are vertex-disjoint, the six support edges form a matching.

Builder chooses $M:=S_1\cup S_2$
as his matching.
Let $w$ be the new vertex created by the DPG step.
Then $\{w,a_1,b_1,c_1\}$ and $\{w,a_2,b_2,c_2\}$
span two edge-disjoint copies of $K_4$ in the intermediate graph.
Builder places one whole $K_4$ into the first part of the partition and the other whole
$K_4$ into the second part.
Whichever part Chooser keeps, it contains a copy of $K_4$.
\end{proof}

\begin{lemma}\label{lem:K4-lower}
If $G_0$ is triangle-free, then $\tau_{K_4}(G_0)\ge 3$.
\end{lemma}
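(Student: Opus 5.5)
We need to rule out $\tau_{K_4}(G_0)\in\{0,1,2\}$, that is, to give Chooser a strategy that keeps every graph up to and including $G_2$ free of $K_4$. The case $t=0$ is immediate, since $G_0$ is triangle-free. The case $t=1$ is Corollary~\ref{cor:no-one-round-k4}. So the real work is showing that Chooser can survive a second round.

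The plan is to control the triangle structure of $G_1$. In the first round, every triangle of the intermediate graph $G_0^+$ contains the new vertex $w_0$, because no edges between old vertices are added and $G_0$ is triangle-free. Hence every triangle of $G_1$ passes through $w_0$.

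Now consider any $K_4$ in $G_1^+$. It has at most one new vertex $w_1$, so it contains at least three old vertices spanning a triangle in $G_1$. We split into two cases.

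\emph{Case 1: the $K_4$ avoids $w_1$.} Then it is a $K_4$ in $G_1$, which is impossible since $G_1$ is $K_4$-free.

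\emph{Case 2: the $K_4$ is $\{w_1\}\cup X$ with $X$ a triangle.} Here $X$ must be a triangle of $G_1[V(M_1)]-M_1$. By the observation above, $X$ contains $w_0$, so $X=\{w_0,u,v\}$ with $uv\in E(G_1)$.

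So every $K_4$ in $G_1^+$ contains both $w_0$ and $w_1$, and hence contains the edge $w_0w_1$ if that edge exists. But $w_0w_1$ is never an edge: $w_1$ is joined only to vertices of $V(M_1)$, and although $w_0$ could lie in $V(M_1)$, the edge $w_1w_0$ then exists. Let me redo this. The $K_4$ contains $w_1w_0$, and that edge exists only if $w_0\in V(M_1)$, in which case it is a single edge. Therefore all copies of $K_4$ in $G_1^+$ share the common edge $w_0w_1$, so they pairwise intersect in an edge. In any partition, all of them lie on the side containing $w_0w_1$, and Chooser keeps the other side. This shows $\tau_{K_4}(G_0)\neq 2$.

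A subtle point concerns Chooser's first-round move, which is arbitrary here: the argument uses only that $G_1\subseteq G_0^+$. Every triangle of $G_0^+$ contains $w_0$, and subgraphs inherit this property. Likewise, $G_1$ is $K_4$-free for any choice Chooser makes, since $G_0^+$ itself is $K_4$-free: a $K_4$ in $G_0^+$ would need a triangle in $G_0$. So the argument in fact shows $K_4\not\subseteq G_1$ for any choice and $K_4\not\subseteq G_2$ under Chooser's second-round choice. Together with the trivial cases $t=0,1$, this gives $\tau_{K_4}(G_0)\ge 3$.

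The main obstacle is verifying carefully the claim that every $K_4$ in $G_1^+$ contains the edge $w_0w_1$. The key checks are that the triangle $X$ uses old edges of $G_1$ not in $M_1$ and that it must contain $w_0$. Once this is checked, the second round reduces to the same ``all targets share an edge'' argument that underlies the converse direction of Theorem~\ref{thm:larger-cliques-oneround}.
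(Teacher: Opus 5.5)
Your proof is correct and essentially the same as the paper's: every triangle after the first round contains the first new vertex, so every $K_4$ in the second intermediate graph contains both new vertices and hence the edge $w_0w_1$, and Chooser keeps the side without that edge. In the write-up, delete the retracted sentence claiming that $w_0w_1$ is never an edge; it is false, and your own next sentence contradicts it.
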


\begin{proof}
First, Builder cannot force a copy of $K_4$ in one round by Corollary~\ref{cor:no-one-round-k4}.

Now consider Round~2.
Every triangle present in the graph after Round~1 contains $w_1$, because $G_0$ was
triangle-free and the first DPG step adds no old-old edges.
Suppose the second intermediate graph contains a copy of $K_4$.
It must contain the new vertex $w_2$, since the current graph before Round~2 is still $K_4$-free.
Removing $w_2$, the remaining three vertices form a triangle in the Round~1 graph, hence
must contain $w_1$.
Therefore every copy of $K_4$ in the second intermediate graph contains both $w_1$ and $w_2$.
So any two such copies share the edge $w_1w_2$, and hence cannot be edge-disjoint.

Therefore Builder cannot force a copy of $K_4$ in Round~2 either, since forcing in one round
requires two edge-disjoint copies of the target, one for each side of the partition.
Thus $\tau_{K_4}(G_0)\ge 3$.
\end{proof}

\begin{definition}
Let $H$ be the graph with vertex set
\[
\{z\}\cup \{a_i,b_i,a_i',a_i'',b_i',b_i'': i=1,2,3,4\},
\]
and edge set consisting of
\begin{itemize}
    \item the four triangles
    \[
    za_ib_i \qquad (i=1,2,3,4),
    \]
    \item the pendant edges
    \[
    a_ia_i',\ a_ia_i'',\ b_ib_i',\ b_ib_i'' \qquad (i=1,2,3,4).
    \]
\end{itemize}
Thus $H$ is a four-triangle fan around the apex $z$, with two leaves attached to each
non-apex vertex (Fig.~\ref{fig:gadget-H}).
\end{definition}

\begin{lemma}\label{lem:H}
From the graph $H$, Builder can force a graph containing two compatible supported
triangles in one round.
\end{lemma}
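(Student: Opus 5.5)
The plan is to choose one matching in $H$ so that the intermediate graph $H^+$ contains \emph{two} edge-disjoint configurations, each consisting of two compatible supported triangles. Builder then puts one configuration on each side of the partition, so whichever side Chooser keeps contains two compatible supported triangles. Here I read ``compatible'' as in Lemma~\ref{lem:two-supported}: the two triangles are vertex-disjoint and their six support edges form a matching. There is an immediate obstacle to work around: all four triangles $za_ib_i$ share the apex $z$. So the new vertex $w$ must act as a second apex, giving triangles that avoid $z$.

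\textbf{Matching and new triangles.} Builder chooses
\[
M=\{a_ia_i',\ b_ib_i' : i=1,2,3,4\},
\]
which is a matching because the pendant edges have distinct endpoints. In $H^+$ the vertex $w$ is adjacent to $a_i,b_i,a_i',b_i'$ for every $i$. The edges $a_ib_i$, $za_i$, $zb_i$ and the pendant edges $a_ia_i''$, $b_ib_i''$ all survive. Hence for each $i$ we get two kinds of triangles:
\begin{itemize}
\item a \emph{$w$-triangle} $\{w,a_i,b_i\}$, supported by $\{wa_i',\ a_ia_i'',\ b_ib_i''\}$; this works because $a_i'$ is now a leaf hanging only from $w$;
\item a \emph{$z$-triangle} $\{z,a_i,b_i\}$, supported by $\{a_ia_i'',\ b_ib_i''\}$ together with one edge $za_j$ or $zb_j$ for some $j\neq i$.
\end{itemize}
A $w$-triangle on index $i$ and a $z$-triangle on index $j\neq i$ are vertex-disjoint. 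Provided $z$'s support edge avoids index $i$, their six support edges form a matching. So such a pair is a compatible pair of supported triangles.

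\textbf{Partition.} Builder needs two such pairs that are edge-disjoint, and here the choice of $z$'s support edges matters. The first attempt, side A using a $w$-triangle on $1$, a $z$-triangle on $2$ and support $za_3$, fails: side B would want the $z$-triangle on $3$, which uses $za_3$. The fix is the following choice.
\begin{itemize}
\item Side A: the $w$-triangle on index $1$ with supports $wa_1',a_1a_1'',b_1b_1''$, and the $z$-triangle on index $2$ with supports $a_2a_2'',b_2b_2'',zb_4$.
\item Side B: the $w$-triangle on index $4$ with supports $wa_4',a_4a_4'',b_4b_4''$, and the $z$-triangle on index $3$ with supports $a_3a_3'',b_3b_3'',za_1$.
\end{itemize}
All remaining edges of $H^+$ are placed arbitrarily.

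\textbf{Verification.} The only real check is that these edges are distinct, so the two configurations are edge-disjoint. Side A uses $zb_4$, which is not an edge of the index-$4$ $w$-triangle. Side B uses $za_1$, which is not an edge of the index-$1$ $w$-triangle. All other edges are indexed by distinct $i$. Within each side, the supports avoid the other triangle's vertices and form a matching, so each side is a pair of compatible supported triangles. Whichever side Chooser keeps therefore contains two compatible supported triangles, as claimed.
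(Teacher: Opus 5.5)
Your proof is correct and follows essentially the paper's route: one DPG step on pendant edges creates a second apex, and each side of the partition pairs an old $z$-triangle with a new-apex triangle. The extra supports are a spoke of $z$ into a branch whose triangle is realized through the new apex, together with a detached leaf. The only difference is cosmetic: the paper detaches leaves only on branches $3,4$ (matching $\{a_3a_3',b_3b_3',a_4a_4',b_4b_4'\}$, with extra supports $za_4,z'a_4'$ on one side and $za_3,z'b_3'$ on the other), whereas you detach one leaf on every branch so that each new-apex triangle can use its own leaf.
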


\begin{proof}
Builder chooses the matching $ M=\{a_3a_3',\ b_3b_3',\ a_4a_4',\ b_4b_4'\}$.
Let $z'$ be the new vertex created by the DPG step.

Then the intermediate graph contains the two old triangles
$T_0=\{z,a_1,b_1\}$ and $T_1=\{z,a_2,b_2\}$,
and the two new triangles $U_0=\{z',a_3,b_3\}$ and $U_1=\{z',a_4,b_4\}$.

Builder defines the first side of the partition to contain:
\begin{itemize}
    \item all edges of $T_0$, the support edges $a_1a_1'',b_1b_1'', za_4$, making $T_0$ a supported triangle;
    \item all edges of $U_0$, the support edges $a_3a_3'', b_3b_3'' z'a_4'$, making $U_0$ a supported triangle.
\end{itemize}

The second side is defined analogously:
\begin{itemize}
    \item all edges of $T_1$, the support edges $a_2a_2'', b_2b_2'',za_3$,
    making $T_1$ a supported triangle;
    \item all edges of $U_1$, the support edges $a_4a_4'', b_4b_4'', z'b_3'$,
    making $U_1$ a supported triangle.
\end{itemize}

These two subgraphs are edge-disjoint.
Hence whichever side Chooser keeps, the surviving graph contains two compatible
supported triangles (Fig.~\ref{fig:round2-round3-k4}).
\end{proof}

\begin{theorem}\label{thm:rho4}
There exists a triangle-free graph $G$ such that $\tau_{K_4}(G)=3$. Consequently, $\rhoK(4)=3$.
\end{theorem}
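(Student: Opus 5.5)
The plan is to prove the upper bound $\tau_{K_4}(G)\le 3$ for an explicit triangle-free $G$ by chaining three one-round steps. Round~1 forces the gadget $H$, Round~2 forces two compatible supported triangles (Lemma~\ref{lem:H}), and Round~3 forces $K_4$ (Lemma~\ref{lem:two-supported}). The matching lower bound $\tau_{K_4}(G)\ge 3$ is exactly Lemma~\ref{lem:K4-lower}. Together these give $\tau_{K_4}(G)=3$. Since Lemma~\ref{lem:K4-lower} applies to every triangle-free seed, this also gives $\rhoK(4)=3$.

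\textbf{Monotonicity step.} I will first record that forcing is monotone under taking supergraphs. Suppose $G\supseteq G'$ and Builder has a one-round strategy on $G'$ with matching $M$ and partition $E^{(0)}\sqcup E^{(1)}$. On $G$, Builder uses the same $M$, which is still a matching in $G$. He extends the partition by putting every extra edge of $G^+$ into, say, $E^{(0)}$. Whichever side Chooser keeps, the surviving graph contains the surviving graph of the game on $G'$. The properties used here are containing $H$, containing two vertex-disjoint supported triangles, and containing $K_4$. All three are preserved by adding edges and vertices. This lets me apply Lemmas~\ref{lem:H} and~\ref{lem:two-supported} to graphs that merely contain the required configurations.

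\textbf{Round-1 construction.} In Round~1 every triangle of the intermediate graph contains the new vertex $w_1$, so the apex $z$ of $H$ must be $w_1$. Each side of the partition needs its own copy of $H$, so I need eight triangles $w_1a_ib_i$ ($i=1,\dots,8$), each carrying its own pendant edges. I take $G$ to be the forest that is the disjoint union, over $i=1,\dots,8$, of the following pieces:
\begin{itemize}
    \item a path $p_i a_i b_i q_i$;
    \item two leaves $a_i',a_i''$ attached to $a_i$;
    \item two leaves $b_i',b_i''$ attached to $b_i$.
\end{itemize}
Thus $G$ is triangle-free. Builder chooses $M=\{a_ip_i,\ b_iq_i: i=1,\dots,8\}$, which is a matching. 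After the DPG step, $w_1$ is adjacent to all $a_i,b_i$, and the edges $a_ib_i$ survive. This gives eight triangles $w_1a_ib_i$, and the leaf edges are untouched. Builder puts the triangles and pendant edges for $i=1,\dots,4$ on one side and those for $i=5,\dots,8$ on the other. The edges $w_1p_i$ and $w_1q_i$ can go anywhere. Each side then contains a copy of $H$ with apex $w_1$, and the two copies are edge-disjoint.

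\textbf{Rounds 2 and 3.} By Lemma~\ref{lem:H} and monotonicity, Round~2 leaves two compatible supported triangles. By Lemma~\ref{lem:two-supported} and monotonicity, Round~3 produces $K_4$.

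The main point to check carefully is that the configuration from Lemma~\ref{lem:H} really is a pair of vertex-disjoint supported triangles with pairwise distinct support vertices. This is what makes the six support edges a matching, as Lemma~\ref{lem:two-supported} requires. I would verify this against the explicit sides listed in that proof; the support vertex of $U_1$ should be $b_3'$, the new neighbour of $z'$. Beyond that, only the bookkeeping that the two copies of $H$ in Round~1 are edge-disjoint remains, and this is immediate because they use disjoint index sets.
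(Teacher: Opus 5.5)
Your proof is correct and follows the paper's argument: the lower bound is Lemma~\ref{lem:K4-lower}, and the upper bound is the same three-round chain. That chain goes seed $\to$ a graph containing $H$ $\to$ two compatible supported triangles (Lemma~\ref{lem:H}) $\to$ $K_4$ (Lemma~\ref{lem:two-supported}). The only real difference is the seed: the paper takes a $16$-cycle with two leaves per vertex and matches alternate cycle edges, so the matched edges run between consecutive triangles and no auxiliary vertices $p_i,q_i$ are needed, giving the $48$-vertex bound $\seed(4)\le 48$; your forest costs $64$ vertices but is acyclic, and your explicit monotonicity remark cleanly justifies what the paper leaves implicit, namely that leftover edges such as $w_1p_i$ and isolated vertices do not interfere.
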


\begin{proof}
By Lemma~\ref{lem:K4-lower}, every triangle-free seed satisfies $\tau_{K_4}\ge 3.$
So it remains to construct a triangle-free graph $G$ with $\tau_{K_4}(G)\le 3$.

Let $C=c_1c_2\cdots c_{16}c_1$ be a $16$-cycle.
For each $i=1,\dots,16$, attach two leaves $c_i',\ c_i''$ to $c_i$.
Let $G$ be the resulting graph.
Clearly $G$ is triangle-free.

\medskip
\noindent
\textbf{Round 1.}
Builder chooses the matching
\[
M_1=\{c_1c_2,\ c_3c_4,\ c_5c_6,\ c_7c_8,\ c_9c_{10},\ c_{11}c_{12},\ c_{13}c_{14},\ c_{15}c_{16}\}.
\]
Let $w_1$ be the new vertex created by the DPG step.

The surviving cycle edges are
\[
c_2c_3,\ c_4c_5,\ c_6c_7,\ c_8c_9,\ c_{10}c_{11},\ c_{12}c_{13},\ c_{14}c_{15},\ c_{16}c_1,
\]
so the intermediate graph contains the eight triangles
\[
\{w_1,c_2,c_3\},\ \{w_1,c_4,c_5\},\ \dots,\ \{w_1,c_{16},c_1\}.
\]
Builder partitions the edge set so that one side contains exactly the first four such triangles,
together with all leaf edges incident to their non-apex vertices, and the other side contains
the last four such triangles with their leaf edges.
Whichever side Chooser keeps, the surviving graph is isomorphic to $H$ (Fig.~\ref{fig:gadget-H}).

\medskip
\noindent
\textbf{Round 2.}
From $H$, by Lemma~\ref{lem:H}, Builder can force a graph containing two compatible
supported triangles.

\medskip
\noindent
\textbf{Round 3.}
From two compatible supported triangles, by Lemma~\ref{lem:two-supported}, Builder
can force a copy of $K_4$.

Therefore $\tau_{K_4}(G)\le 3$.
\end{proof}

\begin{corollary}
With the seed from Theorem~\ref{thm:rho4}, we have $
\seed(4)\le 48.$
\end{corollary}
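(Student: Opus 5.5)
The bound $\seed(4)\le 48$ follows from Theorem~\ref{thm:rho4} by counting vertices. By definition, $\seed(4)$ is the minimum of $|V(G)|$ over triangle-free graphs $G$ with $\tau_{K_4}(G)<\infty$. So it is enough to exhibit one such graph on $48$ vertices. The seed $G$ built in the proof of Theorem~\ref{thm:rho4} is the natural candidate.

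First I recall the two properties of $G$ already established there. The graph $G$ is triangle-free: it is a $16$-cycle with pendant leaves attached, and a pendant leaf has degree one, so it cannot lie in any triangle. The cycle itself has length $16>3$, so it contains no triangle either. The proof also shows $\tau_{K_4}(G)\le 3<\infty$. Hence $G$ belongs to the set over which $\seed(4)$ is minimized.

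Next I count $|V(G)|$. There are $16$ cycle vertices $c_1,\dots,c_{16}$. Each cycle vertex $c_i$ carries two distinct leaves $c_i',c_i''$, and leaves at different cycle vertices are distinct, giving $32$ leaves. In total $|V(G)|=16+32=48$, and therefore $\seed(4)\le |V(G)|=48$.

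There is no real obstacle here; the only care needed is that the count is of the seed $G_0$ alone. The new vertices $w_t$ created during play are not part of the seed and are not counted. The correctness of Theorem~\ref{thm:rho4} itself, which I take as given, is where all the substance lies.
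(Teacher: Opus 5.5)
Your proof is correct and is essentially the paper's argument: the seed from Theorem~\ref{thm:rho4} is triangle-free with $\tau_{K_4}(G)\le 3<\infty$ and has $16+2\cdot 16=48$ vertices, so it witnesses $\seed(4)\le 48$. Your explicit check that $G$ belongs to the class being minimized over (triangle-free, with finite $\tau_{K_4}$) simply spells out what the paper leaves implicit.
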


\begin{proof}
The seed consists of a $16$-cycle together with two leaves attached to each cycle vertex,
so it has $16+2\cdot 16=48$ vertices.
\end{proof}

\begin{figure}[htbp]
\centering
\begin{tikzpicture}[scale=1.0,
    vertex/.style={circle,draw,fill=white,inner sep=1.5pt},
    edge/.style={thin},
    every node/.style={font=\small}
]

\node at (0,3.8) {\textbf{The four-triangle fan gadget $H$}};

\node[vertex,label=above:$z$] (z) at (0,2.3) {};

\node[vertex,label=below:$a_1$] (a1) at (-4.4,0.9) {};
\node[vertex,label=below:$b_1$] (b1) at (-2.8,0.9) {};
\draw[edge] (z)--(a1) -- (b1) -- (z);

\node[vertex,label={[xshift=-3pt,yshift=-4pt]below:$a_1'$}]   (a1p)  at (-4.8,-0.8) {};
\node[vertex,label={[xshift= 3pt,yshift=-4pt]below:$a_1''$}]  (a1pp) at (-4.0,-0.8) {};
\node[vertex,label={[xshift=-3pt,yshift=-4pt]below:$b_1'$}]   (b1p)  at (-3.1,-0.8) {};
\node[vertex,label={[xshift= 3pt,yshift=-4pt]below:$b_1''$}]  (b1pp) at (-2.3,-0.8) {};
\draw[edge] (a1)--(a1p);
\draw[edge] (a1)--(a1pp);
\draw[edge] (b1)--(b1p);
\draw[edge] (b1)--(b1pp);

\node[vertex,label=below:$a_2$] (a2) at (-1.4,0.9) {};
\node[vertex,label=below:$b_2$] (b2) at (0.2,0.9) {};
\draw[edge] (z)--(a2) -- (b2) -- (z);

\node[vertex,label={[xshift=-3pt,yshift=-4pt]below:$a_2'$}]   (a2p)  at (-1.7,-0.8) {};
\node[vertex,label={[xshift= 3pt,yshift=-4pt]below:$a_2''$}]  (a2pp) at (-0.9,-0.8) {};
\node[vertex,label={[xshift=-3pt,yshift=-4pt]below:$b_2'$}]   (b2p)  at (-0.2,-0.8) {};
\node[vertex,label={[xshift= 3pt,yshift=-4pt]below:$b_2''$}]  (b2pp) at (0.6,-0.8) {};
\draw[edge] (a2)--(a2p);
\draw[edge] (a2)--(a2pp);
\draw[edge] (b2)--(b2p);
\draw[edge] (b2)--(b2pp);

\node[vertex,label=below:$a_3$] (a3) at (1.6,0.9) {};
\node[vertex,label=below:$b_3$] (b3) at (3.2,0.9) {};
\draw[edge] (z)--(a3) -- (b3) -- (z);

\node[vertex,label={[xshift=-3pt,yshift=-4pt]below:$a_3'$}]   (a3p)  at (1.2,-0.8) {};
\node[vertex,label={[xshift= 3pt,yshift=-4pt]below:$a_3''$}]  (a3pp) at (2.0,-0.8) {};
\node[vertex,label={[xshift=-3pt,yshift=-4pt]below:$b_3'$}]   (b3p)  at (2.8,-0.8) {};
\node[vertex,label={[xshift= 3pt,yshift=-4pt]below:$b_3''$}]  (b3pp) at (3.6,-0.8) {};
\draw[edge] (a3)--(a3p);
\draw[edge] (a3)--(a3pp);
\draw[edge] (b3)--(b3p);
\draw[edge] (b3)--(b3pp);

\node[vertex,label=below:$a_4$] (a4) at (4.6,0.9) {};
\node[vertex,label=below:$b_4$] (b4) at (6.2,0.9) {};
\draw[edge] (z)--(a4) -- (b4) -- (z);

\node[vertex,label={[xshift=-3pt,yshift=-4pt]below:$a_4'$}]   (a4p)  at (4.2,-0.8) {};
\node[vertex,label={[xshift= 3pt,yshift=-4pt]below:$a_4''$}]  (a4pp) at (5.0,-0.8) {};
\node[vertex,label={[xshift=-3pt,yshift=-4pt]below:$b_4'$}]   (b4p)  at (5.8,-0.8) {};
\node[vertex,label={[xshift= 3pt,yshift=-4pt]below:$b_4''$}]  (b4pp) at (6.6,-0.8) {};
\draw[edge] (a4)--(a4p);
\draw[edge] (a4)--(a4pp);
\draw[edge] (b4)--(b4p);
\draw[edge] (b4)--(b4pp);

\end{tikzpicture}
\caption{The gadget $H$ forced after Round~1 in the proof of $\rhoK(4)=3$. It consists of four triangles sharing an apex $z$, together with two leaves attached to each non-apex vertex.}
\label{fig:gadget-H}
\end{figure}

\begin{figure}[htbp]
\centering
\begin{tikzpicture}[scale=1.0,
    vertex/.style={circle,draw,fill=white,inner sep=1.5pt},
    edge/.style={thin},
    chosen/.style={very thick},
    support/.style={dashed, very thick},
    every node/.style={font=\small}
]

\begin{scope}[yshift=4.8cm]
\node at (1.2,4.2) {\textbf{Round 2 on $H$}};

\node[vertex,label=above:$z$] (z) at (1.2,2.8) {};
\node[vertex,label=below:$a_1$] (a1) at (-2.4,1.2) {};
\node[vertex,label=below:$b_1$] (b1) at (-1.2,1.2) {};
\node[vertex,label=below:$a_2$] (a2) at (0.2,1.2) {};
\node[vertex,label=below:$b_2$] (b2) at (1.4,1.2) {};
\node[vertex,label=below:$a_3$] (a3) at (2.8,1.2) {};
\node[vertex,label=below:$b_3$] (b3) at (4.0,1.2) {};
\node[vertex,label=below:$a_4$] (a4) at (5.4,1.2) {};
\node[vertex,label=below:$b_4$] (b4) at (6.6,1.2) {};

\draw[edge] (z)--(a1)--(b1)--(z);
\draw[edge] (z)--(a2)--(b2)--(z);
\draw[edge] (z)--(a3)--(b3)--(z);
\draw[edge] (z)--(a4)--(b4)--(z);

\node[vertex,label={[xshift=-3pt,yshift=-4pt]below:$a_3'$}]  (a3p)  at (2.4,-0.6) {};
\node[vertex,label={[xshift= 3pt,yshift=-4pt]below:$a_3''$}] (a3pp) at (3.2,-0.6) {};
\node[vertex,label={[xshift=-3pt,yshift=-4pt]below:$b_3'$}]  (b3p)  at (3.8,-0.6) {};
\node[vertex,label={[xshift= 3pt,yshift=-4pt]below:$b_3''$}] (b3pp) at (4.6,-0.6) {};

\node[vertex,label={[xshift=-3pt,yshift=-4pt]below:$a_4'$}]  (a4p)  at (5.2,-0.6) {};
\node[vertex,label={[xshift= 3pt,yshift=-4pt]below:$a_4''$}] (a4pp) at (6.0,-0.6) {};
\node[vertex,label={[xshift=-3pt,yshift=-4pt]below:$b_4'$}]  (b4p)  at (6.6,-0.6) {};
\node[vertex,label={[xshift= 3pt,yshift=-4pt]below:$b_4''$}] (b4pp) at (7.4,-0.6) {};

\draw[edge] (a3)--(a3p); \draw[edge] (a3)--(a3pp);
\draw[edge] (b3)--(b3p); \draw[edge] (b3)--(b3pp);
\draw[edge] (a4)--(a4p); \draw[edge] (a4)--(a4pp);
\draw[edge] (b4)--(b4p); \draw[edge] (b4)--(b4pp);

\draw[chosen] (a3)--(a3p);
\draw[chosen] (b3)--(b3p);
\draw[chosen] (a4)--(a4p);
\draw[chosen] (b4)--(b4p);

\node[align=center] at (2.1,-1.8)
{Builder chooses 
$M_2=\{a_3a_3',\,b_3b_3',\,a_4a_4',\,b_4b_4'\}$.};
\end{scope}

\draw[-{Latex[length=3mm,width=2mm]}, very thick] (2.0,2.4) -- (2.0,1.2);
\node at (4,1.8) {\small DPG step + partition};

\begin{scope}[yshift=-3.6cm]
\node at (1.2,4.0) {\textbf{One surviving side after Round 2}};

\node[vertex,label=above:$z$] (Z) at (-1.8,2.5) {};
\node[vertex,label=below:$a_1$] (A1) at (-3.0,1.1) {};
\node[vertex,label=below:$b_1$] (B1) at (-1.8,1.1) {};
\draw[edge] (Z)--(A1)--(B1)--(Z);

\node[vertex,label={[xshift=-2pt,yshift=-4pt]below:$a_1''$}] (A1pp) at (-3.5,-0.4) {};
\node[vertex,label={[xshift= 2pt,yshift=-4pt]below:$b_1''$}] (B1pp) at (-1.2,-0.4) {};
\node[vertex,label=below:$a_4$] (A4aux) at (-0.2,1.1) {};

\draw[support] (A1)--(A1pp);
\draw[support] (B1)--(B1pp);
\draw[support] (Z)--(A4aux);

\node at (-1.8,-1.5) {$T_0$ with a support matching};

\node[vertex,label=above:$z'$] (Zp) at (4.2,2.5) {};
\node[vertex,label=below:$a_3$] (A3) at (3.0,1.1) {};
\node[vertex,label=below:$b_3$] (B3) at (4.2,1.1) {};
\draw[edge] (Zp)--(A3)--(B3)--(Zp);

\node[vertex,label={[xshift=-2pt,yshift=-4pt]below:$a_3''$}] (A3pp) at (2.5,-0.4) {};
\node[vertex,label={[xshift= 2pt,yshift=-4pt]below:$b_3''$}] (B3pp) at (4.8,-0.4) {};
\node[vertex,label=below:$a_4'$] (A4paux) at (5.8,1.1) {};

\draw[support] (A3)--(A3pp);
\draw[support] (B3)--(B3pp);
\draw[support] (Zp)--(A4paux);

\node at (4.2,-1.5) {$U_0$ with a support matching};

\end{scope}

\end{tikzpicture}
\caption{Round~2 in the proof of $\rhoK(4)=3$. From the fan gadget $H$, Builder chooses a matching on the third and fourth branches. After the DPG step and the partition, one surviving side contains two compatible supported triangles. In Round~3, Builder chooses the union of the two support matchings; the new apex then completes two edge-disjoint copies of $K_4$, one on each triangle core.}
\label{fig:round2-round3-k4}
\end{figure}

\section{Lower bounds for larger clique forcing}

\begin{lemma}\label{lem:omega-growth}
If $G^+$ is obtained from a graph $G$ by one DPG step, then
\[
\omega(G^+)\le \omega(G)+1.
\]
Consequently, along every play
\[
\omega(G_t)\le \omega(G_0)+t
\qquad\text{for all }t\ge 0.
\]
\end{lemma}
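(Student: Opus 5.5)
The plan is to analyze an arbitrary clique in $G^+$ according to whether or not it contains the new vertex $w$. The key structural fact, already used in the proofs of Theorem~\ref{thm:larger-cliques-oneround} and Lemma~\ref{lem:K4-lower}, is that the DPG step never adds an edge between two old vertices. It only deletes the edges of $M$ and adds edges incident to $w$. Hence $G^+ - w$ is a spanning subgraph of $G$, namely $G-M$.

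First I would let $Q$ be a clique of $G^+$ with $|Q|=\omega(G^+)$. If $w\notin Q$, then $Q$ is a clique in $G^+-w=G-M\subseteq G$, so $|Q|\le\omega(G)$. If $w\in Q$, then $Q\setminus\{w\}$ is a clique in $G^+-w\subseteq G$, so $|Q|-1\le\omega(G)$. In both cases $\omega(G^+)\le\omega(G)+1$.

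For the consequence along a play, I would observe that $G_{t+1}$ is a spanning subgraph of $G_t^+$, since Chooser keeps one part of a partition of $E(G_t^+)$ on the same vertex set. Therefore $\omega(G_{t+1})\le\omega(G_t^+)\le\omega(G_t)+1$, and induction on $t$ from the base case $t=0$ gives $\omega(G_t)\le\omega(G_0)+t$.

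There is no real obstacle here. The only point needing care is to state explicitly that no old--old edges are created, so that deleting $w$ from $G^+$ lands inside $G$. The monotonicity of $\omega$ under taking subgraphs then handles Chooser's move.
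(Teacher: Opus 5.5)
Your proof is correct and follows the paper's argument. You split a maximum clique of $G^+$ according to whether it contains $w$, use that the DPG step creates no old--old edges, and then induct using that Chooser's move only deletes edges. The one cosmetic difference is that the paper places $Q\setminus\{w\}$ inside $G[V(M)]-M$ (since $N_{G^+}(w)=V(M)$), whereas you place it in $G-M$, which is all the bound requires.
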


\begin{proof}
A DPG step adds a single new vertex $w$ and only edges incident with $w$.
Hence any clique in $G^+$ either avoids $w$ and was a clique in $G$, or it contains $w$. 
If it contains $w$, the remaining vertices of the clique must form a complete graph in $G[V(M)]-M$. 
Since $G[V(M)]-M$ is a subgraph of $G$, these remaining vertices form a clique of size one less in $G$. 
Therefore, $\omega(G^+)\le \max\{\omega(G), \omega(G[V(M)]-M) + 1\} \le \omega(G)+1$.

The second statement follows immediately by induction on $t$, since Chooser's partition step only deletes edges and thus cannot increase the clique number.
\end{proof}

\begin{lemma}\label{lem:all-new-vertices}
Let $r\ge 3$, and suppose that $G_0$ is $K_r$-free.
For $t\ge 1$, every copy of $K_{r-1+t}$
in the intermediate graph of round $t$ contains all new vertices $w_1,\dots,w_t$.
The same is therefore true in the current graph $G_t$.
\end{lemma}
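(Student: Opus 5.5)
We prove the statement by induction on $t$, with the inductive hypothesis applied to the current graph $G_t$. We also use two structural facts. First, every DPG step adds only edges incident with the new vertex. Second, Chooser's move only deletes edges, so $G_t$ is a spanning subgraph of the intermediate graph $G_{t-1}^+$ of round $t$; here we index rounds from $1$, so that round $t$ builds $w_t$ from $G_{t-1}$.

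\emph{Base case $t=1$.} Let $Q$ be a copy of $K_r$ in the intermediate graph of round $1$. If $w_1\notin Q$, then all edges of $Q$ join old vertices. Since the DPG step creates no old--old edges, $Q$ would already be a $K_r$ in $G_0$, contradicting that $G_0$ is $K_r$-free. Hence $w_1\in Q$.

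\emph{Inductive step.} Assume that every copy of $K_{r-2+t}$ in $G_{t-1}$ contains $w_1,\dots,w_{t-1}$. Let $Q$ be a copy of $K_{r-1+t}$ in the intermediate graph of round $t$, which is $G_{t-1}^+$ with new vertex $w_t$.

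\begin{enumerate}[label=(\alph*)]
\item We first show $w_t\in Q$. By Lemma~\ref{lem:omega-growth}, $\omega(G_{t-1})\le \omega(G_0)+t-1\le r-2+t$. If $w_t\notin Q$, then all edges of $Q$ are old edges of $G_{t-1}$. This would give a clique of order $r-1+t$ in $G_{t-1}$, which is impossible.
\item Since $w_t\in Q$, the set $Q\setminus\{w_t\}$ spans a $K_{r-2+t}$ using only edges of $G_{t-1}[V(M_t)]-M_t\subseteq G_{t-1}$. By the inductive hypothesis this clique contains $w_1,\dots,w_{t-1}$. Together with $w_t$, this shows $Q$ contains all of $w_1,\dots,w_t$.
\end{enumerate}

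This completes the induction for intermediate graphs. The claim for the current graph follows because $G_t$ is a subgraph of the round-$t$ intermediate graph on the same vertex set. Any $K_{r-1+t}$ in $G_t$ is therefore a $K_{r-1+t}$ in the intermediate graph, and so contains all the $w_i$.

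The only delicate point is the indexing. The inductive hypothesis must be applied to the current graph $G_{t-1}$, not to the intermediate graph of round $t-1$, so that the passage through Chooser's deletion is handled correctly. The clique-number bound from Lemma~\ref{lem:omega-growth} also has to be applied at the right index $t-1$. The hypothesis $r\ge 3$ is not needed for this argument; it is simply inherited from the setting of the statement.
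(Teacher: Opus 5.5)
Your proof is correct and follows the paper's argument essentially verbatim: induction on $t$, using Lemma~\ref{lem:omega-growth} to force $w_t\in Q$, then applying the inductive hypothesis for the current graph $G_{t-1}$ to the clique $Q\setminus\{w_t\}$. Your explicit indexing convention (round $t$ builds $w_t$ from $G_{t-1}$) and your spelled-out passage from the intermediate graph to $G_t$ are exactly what the paper's proof uses implicitly.
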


\begin{proof}
We argue by induction on $t$.
For $t=1$, every copy of $K_r$ created in the first intermediate graph must contain $w_1$,
since $G_0$ is $K_r$-free.

Assume the statement holds for $t-1$, and consider round $t$.
By Lemma~\ref{lem:omega-growth},
\[
\omega(G_{t-1})\le (r-1)+(t-1)=r+t-2.
\]
Hence $G_{t-1}$ contains no copy of $K_{r-1+t}$.
Let $Q$ be a copy of $K_{r-1+t}$ in the intermediate graph of round $t$.
Then $Q$ must contain the new vertex $w_t$, since otherwise it would already lie in $G_{t-1}$.
Removing $w_t$, the remaining vertices span a copy of $K_{r-2+t}=K_{r-1+(t-1)}$
in $G_{t-1}$.
By the induction hypothesis, that clique contains $w_1,\dots,w_{t-1}$.
Therefore $Q$ contains all $w_1,\dots,w_t$.
\end{proof}

\begin{theorem}\label{thm:clique-lower-bound}
Let $r\ge 3$, and let $G_0$ be $K_r$-free.
Then for every $k\ge r+1$,
\[
\tau_{K_k}(G_0)\ge k-r+2.
\]
\end{theorem}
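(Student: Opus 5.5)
The plan is to combine the clique-number growth bound of Lemma~\ref{lem:omega-growth} with the structural Lemma~\ref{lem:all-new-vertices}, exactly as in the proof of Lemma~\ref{lem:K4-lower}. Set $t^\ast:=k-r+1$. We must exhibit a Chooser strategy such that $G_t$ contains no $K_k$ for every $t\le t^\ast$. Chooser plays arbitrarily in rounds $1,\dots,t^\ast-1$ and uses a specific rule only in round $t^\ast$. Throughout, $G_t$ is a spanning subgraph of the intermediate graph of round $t$, which is built from $G_{t-1}$.

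\textbf{Early rounds.} Since $G_0$ is $K_r$-free, $\omega(G_0)\le r-1$. Lemma~\ref{lem:omega-growth} then gives $\omega(G_t)\le r-1+t$ along every play. For $t\le t^\ast-1=k-r$ this yields $\omega(G_t)\le k-1$, so $K_k\not\subseteq G_t$ regardless of the moves. This settles all $t<t^\ast$, including $t=0$.

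\textbf{The critical round $t^\ast$.} Here $r-1+t^\ast=k$, so Lemma~\ref{lem:all-new-vertices} applies to the intermediate graph $G^+_{t^\ast}$ of round $t^\ast$: every copy of $K_k$ in $G^+_{t^\ast}$ contains all of $w_1,\dots,w_{t^\ast}$. Since $k\ge r+1$, we have $t^\ast\ge 2$, so every such copy contains the specific edge $w_{t^\ast-1}w_{t^\ast}$. Builder's partition places this edge, if it is present, in exactly one part $E^{(i)}_{t^\ast}$. Chooser keeps the other part. That part lacks the edge $w_{t^\ast-1}w_{t^\ast}$, so it contains no copy of $K_k$. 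If the edge is absent, $G^+_{t^\ast}$ has no $K_k$ at all and either choice works. Hence $K_k\not\subseteq G_{t^\ast}$.

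Combining the two steps, Chooser prevents $K_k$ in $G_t$ for all $t\le k-r+1$, so $\tau_{K_k}(G_0)\ge k-r+2$. The only delicate point is the index bookkeeping: we need Lemma~\ref{lem:all-new-vertices} at precisely the round where $r-1+t=k$, and we need $t\ge 2$ there so that a common edge among the new vertices exists. Both conditions are guaranteed by the hypothesis $k\ge r+1$.
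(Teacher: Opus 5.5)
Your proof is correct and follows essentially the same route as the paper. Lemma~\ref{lem:omega-growth} rules out $K_k$ before round $k-r+1$, and Lemma~\ref{lem:all-new-vertices} shows that in round $k-r+1$ every copy of $K_k$ contains at least two new vertices, so no two copies are edge-disjoint and Chooser keeps a side free of $K_k$. Naming the common edge $w_{t^\ast-1}w_{t^\ast}$, and giving Chooser one strategy that avoids $K_k$ at every time $t\le k-r+1$, only makes the paper's argument more explicit.
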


\begin{proof}
Set $t:=k-r+1$. By Lemma~\ref{lem:omega-growth}, the clique number cannot reach $k$ before round $t$.
Suppose Builder tries to force a copy of $K_k$ in round $t$.
By Lemma~\ref{lem:all-new-vertices}, every copy of $K_k$ in the intermediate graph of round $t$
contains all vertices $w_1,\dots,w_t$.
Since $k\ge r+1$, we have $t\ge 2$.
Hence any two such copies share at least one edge, namely an edge joining two of the vertices
$w_1,\dots,w_t$.
Therefore there are no two edge-disjoint copies of $K_k$ in the intermediate graph of round $t$.
Builder cannot partition the edge set so that both sides contain a copy of $K_k$.
Thus at least one further round is needed, and
\[
\tau_{K_k}(G_0)\ge t+1 = k-r+2.
\]
\end{proof}

\begin{corollary}\label{cor:trianglefree-k-lb}
If $G_0$ is triangle-free, then for every $k\ge 4$,
\[
\tau_{K_k}(G_0)\ge k-1.
\]
In particular,
\[
\tau_{K_4}(G_0)\ge 3,
\qquad
\tau_{K_5}(G_0)\ge 4.
\]
\end{corollary}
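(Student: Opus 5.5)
This is a direct specialization of Theorem~\ref{thm:clique-lower-bound} with $r=3$. A graph is triangle-free exactly when it is $K_3$-free, so the hypothesis of that theorem holds with $r=3$. The range $k\ge r+1$ becomes $k\ge 4$, which is the range in the statement. The theorem then gives
\[
\tau_{K_k}(G_0)\ge k-3+2=k-1,
\]
and substituting $k=4$ and $k=5$ yields $\tau_{K_4}(G_0)\ge 3$ and $\tau_{K_5}(G_0)\ge 4$.

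The only thing to check is that Theorem~\ref{thm:clique-lower-bound} really covers $r=3$. Its hypothesis is $r\ge 3$, so it does. Within its proof, Lemma~\ref{lem:all-new-vertices} is also stated for $r\ge 3$. The key step uses $t=k-r+1=k-2\ge 2$ when $k\ge 4$. This guarantees that every copy of $K_k$ in the round-$t$ intermediate graph contains at least two new vertices and hence shares the edge between them with every other copy. That is the obstruction to one-round forcing of $K_k$ in round $t$.

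As a consistency check, the bound $\tau_{K_4}(G_0)\ge 3$ reproduces Lemma~\ref{lem:K4-lower} and matches the construction in Theorem~\ref{thm:rho4}. I expect no real obstacle: the work is already contained in Theorem~\ref{thm:clique-lower-bound}, and the corollary needs only the substitution above.
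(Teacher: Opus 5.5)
Your proposal is correct and matches the paper's proof exactly: the paper obtains the corollary by applying Theorem~\ref{thm:clique-lower-bound} with $r=3$, which gives $\tau_{K_k}(G_0)\ge k-3+2=k-1$ for all $k\ge 4$. Your side check that $t=k-2\ge 2$ is also right. Every copy of $K_k$ in the round-$t$ intermediate graph contains all of $w_1,\dots,w_t$, so any two copies share the edge $w_1w_2$, and this is why the theorem applies with $r=3$.
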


\begin{proof}
Apply Theorem~\ref{thm:clique-lower-bound} with $r=3$.
\end{proof}

\section{A conditional universality program}

\subsection{Quantitative parameters}

Recall that, for $k\ge 3$, $\rhoK(k)$ measures the minimum forcing time and $\seed(k)$ the minimum size of a triangle-free seed from which Builder can force a copy of $K_k$, with the convention that these quantities are $\infty$ if the corresponding class is empty

\begin{corollary}\label{cor:rho-lb}
We have
\[
\rhoK(3)=1,
\]
and for every $k\ge 4$,
\[
\rhoK(k)\ge k-1.
\]
\end{corollary}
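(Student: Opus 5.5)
The corollary has two parts, and both come straight from earlier results in the paper, together with the definition of $\rhoK$.

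\emph{The value $\rhoK(3)=1$.} For the lower bound, note that if $G$ is triangle-free then $K_3\not\subseteq G_0$, so $\tau_{K_3}(G)\ge 1$. Hence $\rhoK(3)\ge 1$, and in particular $\rhoK(3)\neq 0$. For the upper bound we need one triangle-free seed with $\tau_{K_3}=1$. Corollary~\ref{cor:paths-oneround} provides this: $P_6$ is triangle-free and $\tau_{K_3}(P_6)=1$. Theorem~\ref{thm:cycles-triangle} gives another example, since $C_4$ is triangle-free with $\tau_{K_3}(C_4)=1$. Because such a seed exists, the defining class is nonempty and the minimum equals $1$.

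\emph{The bound $\rhoK(k)\ge k-1$ for $k\ge4$.} Take any triangle-free $G$ with $\tau_{K_k}(G)<\infty$. Corollary~\ref{cor:trianglefree-k-lb}, which is Theorem~\ref{thm:clique-lower-bound} applied with $r=3$, gives $\tau_{K_k}(G)\ge k-1$. Taking the minimum over all such $G$ yields $\rhoK(k)\ge k-1$.

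There is one convention to check. If no triangle-free seed forces $K_k$, the class is empty and $\rhoK(k)=\infty$ by definition, so the inequality holds trivially. For $k=4$ the class is in fact nonempty by Theorem~\ref{thm:rho4}, and there the bound is attained.

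Nothing in this argument is a real obstacle. The only point needing care is that the definition of $\rhoK$ minimises only over seeds with finite forcing time. For $k=3$ we therefore have to exhibit an explicit witness (here $P_6$), not merely quote a lower bound.
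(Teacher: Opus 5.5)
Your proof is correct and follows the paper's argument. For $\rhoK(3)=1$ both use a triangle-free one-round witness: the paper takes $C_4$, while you take $P_6$ and mention $C_4$ as an alternative. For $k\ge 4$ both apply Corollary~\ref{cor:trianglefree-k-lb}, and your explicit remarks on the trivial bound $\tau_{K_3}\ge 1$ and on the empty-class convention spell out steps the paper leaves implicit.
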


\begin{proof}
The equality $\rhoK(3)=1$ follows from the fact that Builder can force a triangle from $C_4$
in one round.
For $k\ge 4$, apply Corollary~\ref{cor:trianglefree-k-lb}.
\end{proof}

\subsection{Universality conjecture}

\begin{conjecture}[Clique universality]\label{conj:universality}
For every $k\ge 3$,
\[
\rhoK(k)<\infty.
\]
Equivalently, every clique can be forced from some triangle-free seed.
\end{conjecture}

A natural approach to Conjecture~\ref{conj:universality} is to promote forceable cliques recursively. The proof of $\rhoK(4)=3$ suggests that the relevant inductive objects are not plain cliques, but supported copies of cliques.

\subsection{A supported-clique amplification mechanism}

\begin{definition}
A \emph{supported copy of $F$} in a graph $G$ is a pair $(F',S)$ such that
\begin{itemize}
    \item $F'$ is a subgraph of $G$ isomorphic to $F$,
    \item $S$ is a matching in $G$ such that every vertex of $F'$ is incident with exactly one edge of $S$,
    \item the endpoints of $S$ outside $F'$ are disjoint from $V(F')$.
\end{itemize}
Two supported copies $(F'_1,S_1)$ and $(F'_2,S_2)$ are \emph{compatible} if
the subgraphs $F'_1$ and $F'_2$ are vertex-disjoint and $S_1\cup S_2$ is a matching.
\end{definition}

The next lemma is the basic inductive step for clique forcing.

\begin{lemma}\label{lem:general-supported}
If a graph $G$ contains two compatible supported copies of $K_r$, then Builder can force
a copy of $K_{r+1}$ in one round.
\end{lemma}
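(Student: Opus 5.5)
The plan is to follow Lemma~\ref{lem:two-supported} verbatim, with $r$ in place of $3$. Let $(F'_1,S_1)$ and $(F'_2,S_2)$ be the two compatible supported copies of $K_r$. Write $C_j=V(F'_j)=\{c^j_1,\dots,c^j_r\}$ and $S_j=\{c^j_1x^j_1,\dots,c^j_rx^j_r\}$ for $j=1,2$.

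Builder's growth step uses the matching $M:=S_1\cup S_2$. Compatibility says exactly that this is a matching in $G$, so the move is legal. Let $w$ be the new vertex. Every $c^j_i$ is an endpoint of $M$, so $w$ becomes adjacent to all $2r$ clique vertices.

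The one point that needs checking is that no clique edge inside $C_j$ is deleted. An edge of $M$ joins some $c^j_i$ to its support vertex $x^j_i$. By definition of a supported copy, $x^j_i\notin C_j$. Moreover, $x^j_i\notin C_{3-j}$: otherwise $x^j_i$ would be a vertex of $F'_{3-j}$, hence an endpoint of an edge of $S_{3-j}$, and then $S_1\cup S_2$ would not be a matching. So every edge of $M$ joins $C_j$ to a vertex outside $C_j$, and $F'_1,F'_2$ survive intact in $G^+$. Consequently $G^+[\{w\}\cup C_1]$ and $G^+[\{w\}\cup C_2]$ are both copies of $K_{r+1}$.

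These two copies are edge-disjoint. They share only the vertex $w$ because $C_1\cap C_2=\emptyset$, and two cliques meeting in a single vertex share no edge. Builder puts all edges of the first copy in $E^{(0)}$, all edges of the second in $E^{(1)}$, and distributes the remaining edges arbitrarily. Whichever side Chooser keeps contains a $K_{r+1}$, so $K_{r+1}$ is forced in one round.

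I expect no real obstacle here. The only subtle step is ruling out that a support vertex lies in the other clique, and compatibility is exactly what excludes this.
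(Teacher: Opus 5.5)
Your proof is correct and follows the paper's argument: take $M=S_1\cup S_2$, note that $w$ becomes adjacent to every vertex of both untouched cliques, and put the two resulting copies of $K_{r+1}$, which share only $w$, on opposite sides of the partition. One aside is inaccurate but harmless: compatibility does not force $x^j_i\notin C_{3-j}$, because $S_1$ and $S_2$ may share an edge joining $C_1$ to $C_2$ and $S_1\cup S_2$ is then still a matching; you never need this claim, though, since an edge $c^j_ix^j_i$ of $M$ cannot lie inside $C_j$ (as $x^j_i\notin C_j$) nor inside $C_{3-j}$ (as $c^j_i\in C_j$).
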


\begin{proof}
Let the two copies be $C_1$ and $C_2$, with support matchings $S_1$ and $S_2$.
Since they are compatible, the union
\[
M:=S_1\cup S_2
\]
is a matching. Builder chooses $M$ for the DPG step and with the new vertex $w$.

For each $i\in\{1,2\}$, every vertex of $C_i$ is an endpoint of an edge of $M$, hence $w$
is adjacent to every vertex of $C_i$. Since the internal edges of $C_i$ are untouched,
\[
\{w\}\cup V(C_i)
\]
spans a copy of $K_{r+1}$ in the intermediate graph.
Because $C_1$ and $C_2$ are vertex-disjoint, these two copies of $K_{r+1}$ share only the vertex $w$,
and are therefore edge-disjoint.
Builder places one of them into each side of the partition, so whichever side Chooser keeps,
a copy of $K_{r+1}$ survives.
\end{proof}

This lemma reduces clique forcing to the problem of amplifying a forceable clique into two
compatible supported copies of itself.

\begin{conjecture}[Supported-clique amplification]\label{conj:amplifier}
For every $r\ge 3$, if there exists a triangle-free graph $G$ such that
\[
\tau_{K_r}(G)<\infty,
\]
then there exists a triangle-free graph $G'$ such that Builder can force two compatible
supported copies of $K_r$ from $G'$.
\end{conjecture}

The proof of $\rhoK(4)=3$ may be viewed as establishing Conjecture~\ref{conj:amplifier}
in the first nontrivial case $r=3$.

\begin{theorem}[Conditional universality]\label{thm:cond-univ}
Assume Conjecture~\ref{conj:amplifier}. Then for every $k\ge 3$, there exists a triangle-free
graph from which Builder can force $K_k$. Consequently,
\[
\rhoK(k)<\infty
\qquad\text{and}\qquad
\seed(k)<\infty
\]
for all $k\ge 3$.
\end{theorem}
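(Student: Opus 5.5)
The plan is induction on $k$, using Conjecture~\ref{conj:amplifier} as the inductive engine and Lemma~\ref{lem:general-supported} as the promotion step. The base case is $k=3$, where $C_4$ is a triangle-free seed with $\tau_{K_3}(C_4)=1$ (Theorem~\ref{thm:cycles-triangle}). Theorem~\ref{thm:rho4} also supplies $k=4$ directly, although it is not needed.

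For the inductive step, assume there is a triangle-free graph $G_r$ with $\tau_{K_r}(G_r)<\infty$, for some $r\ge 3$.
\begin{enumerate}[label=(\arabic*)]
\item Conjecture~\ref{conj:amplifier} gives a triangle-free graph $G_r'$ from which Builder can force two compatible supported copies of $K_r$. Let $T$ be the number of rounds this takes. The conjecture gives finiteness of $T$ implicitly; I will read ``can force'' exactly as in the definition of forcing time, so it means within some finite bounded number of rounds against every Chooser strategy.
\item Builder follows that strategy from $G_r'$. Every play then reaches, at some round $t\le T$, a current graph containing two compatible supported copies of $K_r$.
\item At the next round Builder applies Lemma~\ref{lem:general-supported}, which guarantees a copy of $K_{r+1}$ after one more round regardless of Chooser's choice.
\end{enumerate}
Hence $\tau_{K_{r+1}}(G_r')\le T+1<\infty$. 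Since $G_r'$ is triangle-free, it serves as $G_{r+1}$, and the induction goes through. It follows that $\rhoK(k)\le\tau_{K_k}(G_k)<\infty$ and $\seed(k)\le |V(G_k)|<\infty$ for every $k\ge3$.

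One point needs care. Lemma~\ref{lem:general-supported} is stated for a static graph $G$, but here it is applied to whatever current graph $G_t$ arises in the play. This is harmless: the game from round $t$ onward is exactly the game started from $G_t$, since only the current graph matters. Concatenating strategies is therefore legitimate. Formally, I would define Builder's strategy as ``play the amplifier strategy until two compatible supported copies of $K_r$ appear, then play the matching $S_1\cup S_2$ and the partition of Lemma~\ref{lem:general-supported}.''

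The main obstacle is bookkeeping of quantifiers in the conjecture's conclusion. Conjecture~\ref{conj:amplifier} only asserts forcing of a \emph{configuration}, not of a subgraph $H$, so I must make precise that ``force'' means the same finite-time guarantee as in the definition of $\tau_H$. I also need to confirm that the appearance time can be bounded uniformly over Chooser's strategies. This follows by a K\"onig-type argument, since each round offers Builder and Chooser finitely many options; alternatively, I would simply adopt the definition with a uniform bound. Note also that the hypothesis of the conjecture for $r$ is exactly the inductive hypothesis, so no strengthening (such as tracking seed sizes) is needed for finiteness. Explicit upper bounds would require quantitative versions of the conjecture and are not part of this statement.
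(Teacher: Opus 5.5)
Your proof is correct and follows the paper's argument exactly. It is an induction on $k$: the base case is $K_3$ from a triangle-free seed such as $C_4$ (via Theorem~\ref{thm:one-round-triangle}), and the step from $r$ to $r+1$ applies Conjecture~\ref{conj:amplifier} followed by one round of Lemma~\ref{lem:general-supported}. Your extra remarks spell out what the paper leaves implicit: that strategies can be concatenated from the current graph $G_t$, and that the time bound is uniform by a K\"onig argument over Chooser's binary choices for a fixed Builder strategy.
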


\begin{proof}
We argue by induction on $k$. The base case $k=3$ follows from
Theorem~\ref{thm:one-round-triangle}.

Assume that there exists a triangle-free graph $G$ such that
\[
\tau_{K_r}(G)<\infty
\]
for some $r\ge 3$.
By Conjecture~\ref{conj:amplifier}, there exists a triangle-free graph $G'$ from which Builder
can force two compatible supported copies of $K_r$.
Applying Lemma~\ref{lem:general-supported}, Builder can then force $K_{r+1}$ in one additional round.
Thus $K_{r+1}$ is forceable from a triangle-free seed.
\end{proof}

\begin{remark}
A stronger quantitative version of Conjecture~\ref{conj:amplifier}, providing explicit control
on the forcing time and on the size of the amplified seed, would yield corresponding upper bounds
for $\rhoK(k)$ and $\seed(k)$ by iteration. We do not pursue such quantitative estimates here.
\end{remark}

\begin{remark}
The idea of the proof of Theorem~\ref{thm:cond-univ} suggests that, in the degree-preserving Builder--Chooser game,
the more plausible super-exponential parameter is the minimum size of a forcing seed rather
than the minimum forcing time. In this sense the DPG setting seems complementary to the original
Builder--Chooser clique game of Pettie, Tardos, and Walczak, where Builder starts from the empty
graph and the complexity is naturally measured in rounds.
\end{remark}

\section{Concluding remarks and open problems}

In this note, we introduced a degree-preserving variant of the Builder--Chooser clique game. By forcing growth to occur via local matching replacements rather than unconstrained global attachment, the game isolates the threshold at which dense subgraphs can reliably emerge from sparse seeds under severe topological constraints.

\subsection{First milestones}

The first concrete milestones in this program can be:
\begin{enumerate}
    \item prove that $\rhoK(5)<\infty$;
    \item prove that $\rhoK(6)<\infty$;
    \item construct explicit triangle-free seeds witnessing these bounds;
    \item extract from those constructions the correct recursive template notion for general $k$.
\end{enumerate}

\subsection{Open problems}

The results of this paper suggest several natural directions for further study.

\begin{enumerate}
    \item \textbf{The template-amplifier conjecture.}
    Prove Conjecture~\ref{conj:amplifier}. This is the main missing ingredient in the recursive universality program for forcing arbitrary cliques.

    \item \textbf{The forcing-time function $\rhoK(k)$.}
    We have $\rhoK(3)=1$ and $\rhoK(4)=3$, while the general lower bound yields $\rhoK(k)\ge k-1$ for all $k\ge 4$.
    Is $\rhoK(k)$ finite for every $k$?
    More generally, determine the growth rate of $\rhoK(k)$.

    \item \textbf{The seed-size function $\seed(k)$.}
    Determine the minimum size of a triangle-free seed from which Builder can force a copy of $K_k$.
    In particular, improve the bounds on $\seed(4)$ and investigate the asymptotic behavior of $\seed(k)$.

    \item \textbf{High-girth seeds.}
    For $k,g\ge 3$, let $\rhoK(k,g)$ denote the minimum forcing time for $K_k$ from a graph of girth at least $g$.
    Is $\rhoK(k,g)$ finite for all $k$ and $g$?

    \item \textbf{Other target graphs.}
    Study forcing times for targets other than cliques, especially cycles and complete bipartite graphs.
    It would be interesting to develop one-round criteria in these settings analogous to Theorem~\ref{thm:one-round-triangle}.
\end{enumerate}

\subsection*{Declaration on the use of generative AI}
Generative AI tools were used in a limited manner during manuscript preparation for language
editing and organizational assistance. The authors verified all mathematical content and are
fully responsible for the final manuscript.

\end{document}